\newtheorem{Theorem}{\bf Theorem}
\newtheorem{lemma}[Theorem]{\bf Lemma}
\newtheorem{proposition}[Theorem]{\bf Proposition}
\newtheorem{definition}[Theorem]{\bf Definition}
\newtheorem{example}[Theorem]{\bf Example}
\newtheorem{remark}[Theorem]{\bf Remark}
\newtheorem{theorem}[Theorem]{\bf Theorem}
\def\scfig #1 #2 {\resizebox{#2}{!}{\includegraphics{#1}}}
\newcommand{\be}{\begin{equation}}
\newcommand{\ee}{\end{equation}}
\def\hpic #1 #2 {\mbox{$\begin{array}[c]{l} 
\epsfig{file=#1,height=#2}\end{array}$}}
\def\wpic #1 #2 {\mbox{$\begin{array}[c]{l} 
\epsfig{file=#1,width=#2}\end{array}$}}
\begin{document}

\title[Planar algebras, quantum information theory and subfactors]{Planar algebras, quantum information theory and subfactors}

\author{Vijay Kodiyalam}
\author{Sruthymurali}
\author{V. S. Sunder}
\address{The Institute of Mathematical Sciences, Chennai, India and Homi Bhabha National Institute, Mumbai, India}
\email{vijay@imsc.res.in,sruthym@imsc.res.in,sunder@imsc.res.in}
\thanks{The authors are very grateful to Prof. Ajit Iqbal Singh for bringing the paper of Reutter and Vicary to their
attention.}
\subjclass[2010]{Primary 46L37, 81P45, 81P68}
%
%
%
%
%
\begin{abstract} We define generalised notions of biunitary elements in planar algebras and show that objects arising in quantum information theory such as Hadamard matrices,
quantum latin squares and unitary error bases are all given by biunitary elements in the spin planar algebra. We show that there are natural subfactor planar algebras associated with biunitary elements.
\end{abstract}
\maketitle

\section{Introduction}
The motivation for this paper comes from the beautiful results of Reutter and Vicary in \cite{RttVcr2016} in which planar algebraic
constructions are used to treat a variety of objects in quantum information theory such as Hadamard matrices,
quantum latin squares and unitary error bases. While pictorial and planar algebraic techniques are used throughout 
that paper, no planar algebra actually makes an appearance, leading to the question as to where these
objects actually live.

In \S 2 we describe Jones' spin planar algebra and a specific recent presentation of it by generators and 
relations as discussed in \cite{KdyMrlSniSnd2019}.

We define and identify an equivalent formulation of the notion of a biunitary element in a planar algebra in \S 3  and show that the spin planar algebra is the natural receptacle of all of the following objects -
Hadamard matrices, quantum Latin squares, biunitary matrices (and unitary error bases) - by identifying these with appropriate types of biunitary
elements (and generalised versions of these) in the spin planar algebra.

The construction of subfactors from biunitary matrices is well known - see \cite{HgrSch1989} and \cite{Snd1989} - and our results naturally suggest that there might
be subfactors associated to our biunitary elements and we show in \S 4 that this is indeed the case - by constructing appropriate planar algebras. 

In the particular case of a 
Latin square arising from a group multiplication table, we identify this planar algebra, unsurprisingly,  with the very well understood
planar algebra of a group in the final \S 5.

\section{The spin planar algebra}

Let us recall Jones' spin planar algebra $P$ which was shown to have a presentation in terms of generators and relations in \cite{KdyMrlSniSnd2019} as follows. Let $S=\{s_1,\hdots,s_n\}$ be a finite set. Take the label set $L=L_{(0,-)}=S$
equipped with the identity involution $*$. Then $P = P(S)$ is defined to be the quotient $P=P(L,R)$ of the universal planar algebra $P(L)$ by the set $R$ of relations given in Figures \ref{fig:spinplanar1} and \ref{fig:spinplanar2}.

The facts that we will need about the spin planar algebra are summarised in the following two results.


\begin{figure}[!h]
\begin{center}
\psfrag{v+}{\huge $v_+$}
\psfrag{v-}{\huge $s_i$}
\psfrag{u1}{\Huge $\displaystyle{\frac{1}{\mu(v_+)}}$}
\psfrag{u2}{\Huge $\displaystyle{\frac{1}{\mu(v_-)}}$}
\psfrag{text1}{\Huge $= \displaystyle{\sqrt{n}}$}
\psfrag{text2}{\Huge $= \displaystyle{\frac{1}{\sqrt{n}}}$}
\resizebox{12.0cm}{!}{\includegraphics{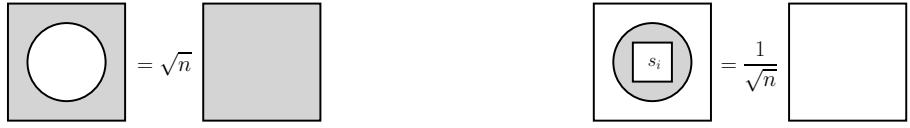}}
\end{center}
\caption{The white and black modulus relations}
\label{fig:spinplanar1}
\end{figure}


\begin{figure}[!h]
\begin{center}
\psfrag{fi}{\Huge $s_i$}
\psfrag{fj}{\Huge $s_j$}
\psfrag{text2}{\Huge $\displaystyle{\sum\limits_i}$}
\psfrag{=}{\Huge $=\frac{1}{\sqrt{n}}$}
\psfrag{=dij}{\bf{\Huge $=\delta_{ij}$}}
\resizebox{11.0cm}{!}{\includegraphics{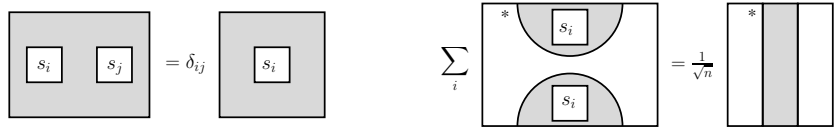}}
\end{center}
\caption{The multiplication relation and the black channel relation}
\label{fig:spinplanar2}
\end{figure}


\begin{figure}[!h]
\begin{center}
\psfrag{v+}{\huge $v_+$}
\psfrag{v-}{\huge $s_i$}
\psfrag{u1}{\Huge $\displaystyle{\frac{1}{\mu(v_+)}}$}
\psfrag{u2}{\Huge $\displaystyle{\frac{1}{\mu(v_-)}}$}
\psfrag{xi}{\Huge $\xi,\xi$}
\psfrag{text3}{\Huge $= \displaystyle{\sqrt{n}}$}
\psfrag{text1}{\Huge $= \displaystyle{\sum\limits_{v_+ \in \mathcal{V}_+}}$}
\psfrag{text2}{\Huge $= \displaystyle{\sum\limits_{i}}$}
\resizebox{9cm}{!}{\includegraphics{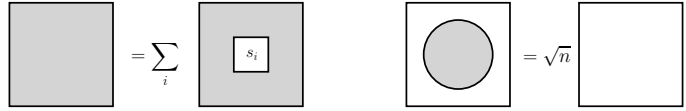}}
\end{center}
\caption{The unit and modulus relations}
\label{fig:spinplanar3}
\end{figure}


\begin{figure}[!h]
\begin{center}
\psfrag{fi}{\Huge $s_i$}
\psfrag{fj}{\Huge $s_j$}
\psfrag{text2}{\Huge $\displaystyle{\sum\limits_i}$}
\psfrag{=}{\Huge $=$}
\psfrag{=dij}{\bf{\Huge $=\delta_{ij}$}}
\resizebox{4cm}{!}{\includegraphics{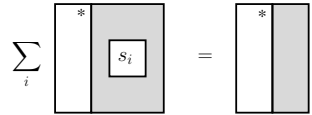}}
\end{center}
\caption{Another unit relation}
\label{fig:spinplanar4}
\end{figure}

\begin{lemma}[Lemma 2 of \cite{KdyMrlSniSnd2019}]
 The unit and modulus relations of Figures \ref{fig:spinplanar3} and \ref{fig:spinplanar4} hold in the planar algebra $P$.
\end{lemma}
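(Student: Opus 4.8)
The plan is to derive each of the relations depicted in Figures~\ref{fig:spinplanar3} and~\ref{fig:spinplanar4} from the defining relations $R$ of Figures~\ref{fig:spinplanar1} and~\ref{fig:spinplanar2} by pure diagrammatic manipulation. Recall that, by the very construction of $P=P(L,R)$ as a quotient of the universal planar algebra $P(L)$, two labelled tangles represent the same element of $P$ exactly when one can be transformed into the other by a finite sequence of moves, each of which is either a planar isotopy or the substitution, inside some disc of the ambient tangle, of one side of a relation in $R$ for the other. So for each asserted identity it suffices to exhibit such a sequence; the whole argument is pictorial, and its only real content is keeping track of scalars and shadings.

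\emph{The modulus relations of Figure~\ref{fig:spinplanar3}.} These are nothing but the white and black modulus relations of Figure~\ref{fig:spinplanar1} redrawn in the presence of an extra through-string $\xi$. The closed loop (bubble) being evaluated bounds a disc disjoint from that string, so the relevant local relation of Figure~\ref{fig:spinplanar1} applies verbatim inside the larger tangle, producing the stated scalar (here $\sqrt{n}$, after dividing out the modulus $\mu(v_\pm)$, and $1/\sqrt n$ in the black case). The only thing to verify is that the shading of the bounded disc — white, respectively black carrying the spin label $s_i$ — is the one for which the corresponding relation of Figure~\ref{fig:spinplanar1} is stated, and this is clear by inspection of the pictures.

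\emph{The unit relations of Figures~\ref{fig:spinplanar3} and~\ref{fig:spinplanar4}.} The content here is that the two ``resolutions of the identity'', namely the sum $\sum_{v_+\in\mathcal V_+}$ over the (single) white state and the sum $\sum_i$ over the black states $s_i$, each act as the identity on one strand. I would begin from the side carrying the sum and first apply the black channel relation of Figure~\ref{fig:spinplanar2} to the cup–cap pair labelled $s_i$: this replaces $\sum_i$ by a single strand together with a closed black loop and a factor $\tfrac1{\sqrt n}$. The closed black loop is then evaluated by the black modulus relation of Figure~\ref{fig:spinplanar1}, contributing a further $\tfrac1{\sqrt n}$ and the expected $\mu(v_-)$; in the version of Figure~\ref{fig:spinplanar4} one additionally invokes the multiplication relation of Figure~\ref{fig:spinplanar2} to collapse the composite $s_i\cdot s_j$ to $\delta_{ij}\,s_i$ before summing. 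Collecting all the factors $\sqrt n$ and $1/\sqrt n$ produced by the two modulus relations leaves exactly $1$, which is the identity side. The white-state relation in Figure~\ref{fig:spinplanar3} is established by the analogous reduction, now using the white modulus relation of Figure~\ref{fig:spinplanar1} in place of the black one.

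\emph{The main obstacle.} No individual move is difficult, but because the white and black modulus relations of Figure~\ref{fig:spinplanar1} carry the mutually inverse normalisations $\sqrt n$ and $1/\sqrt n$, and because the channel and multiplication relations of Figure~\ref{fig:spinplanar2} already incorporate further powers of $n^{\pm 1/2}$, the delicate point is to propagate all of these scalar factors correctly through each pictorial reduction so that they cancel to give precisely the stated constants — and, hand in hand with this, to check at every substitution that the shading of the region in play is the one for which the relation being invoked is actually valid. That bookkeeping, rather than any conceptual difficulty, is where the work lies.
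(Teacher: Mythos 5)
First, a point of comparison: the paper itself offers no proof of this statement --- it is imported verbatim as Lemma~2 of \cite{KdyMrlSniSnd2019} --- so the only argument to measure yours against is the diagrammatic derivation in that reference. Your overall strategy (derive the relations of Figures~\ref{fig:spinplanar3} and~\ref{fig:spinplanar4} from the defining set $R$ of Figures~\ref{fig:spinplanar1} and~\ref{fig:spinplanar2} by local substitution inside the quotient of the universal planar algebra) is the right one and is what is done there. But as written your argument is a plan rather than a proof: you yourself identify the propagation of the factors $n^{\pm 1/2}$ and the shading checks as ``where the work lies'', and that bookkeeping is essentially the entire content of the lemma; none of the three derivations is actually carried out.

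More seriously, the one place where you commit to something concrete appears to be wrong. You claim the modulus relation of Figure~\ref{fig:spinplanar3} is just the modulus relations of Figure~\ref{fig:spinplanar1} ``redrawn'' and hence holds ``verbatim'' by locality, yielding $\sqrt n$ in the white case and $1/\sqrt n$ in the black case. But the relation of Figure~\ref{fig:spinplanar3} evaluates a loop to $\sqrt n$ that is \emph{not} an instance of a defining relation: the defining black modulus relation only evaluates a closed loop whose black interior carries a label $s_i$ (to $1/\sqrt n$), whereas the derived modulus relation concerns an \emph{unlabelled} black loop, and the value $\sqrt n = n\cdot\tfrac{1}{\sqrt n}$ signals that it is obtained by first inserting the unit $\sum_i S(i) = 1_{(0,-)}$ into the empty black region and only then applying the labelled black modulus relation $n$ times. (This derived relation is also exactly what is needed for the assertion in Theorem~\ref{spinthm} that $P$ has modulus $\sqrt n$, which would be automatic if it were already among the defining relations.) So the logical order is the opposite of yours: the unit relation must be established first --- by closing up the black channel relation and comparing the two evaluations of the resulting closed diagram via the white modulus and multiplication relations --- and the new modulus relation is then a corollary. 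Your sketch of the unit relation itself is too loose to check: applying the black channel relation ``to the cup--cap pair labelled $s_i$'' does not by itself produce ``a single strand together with a closed black loop'', and the direction in which you apply the channel relation would introduce a factor of $\sqrt n$, not $1/\sqrt n$. You should redo the derivations in the correct order with the scalars written out explicitly.
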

%
%

\begin{theorem}[Theorem 1 of \cite{KdyMrlSniSnd2019}]{\label{spinthm}}
The spin planar algebra $P$ is a finite dimensional $C^*$- planar algebra with modulus $\sqrt{n}$ and such that $\text{dim}(P_{(0,+)})=1$, $\text{dim}(P_{(0,-)})=n$ and $\text{dim}(P_{(k,\pm)})=n^k$ for all $k>0$.  
Bases for $P_{(k,\pm)}$ for $k > 0$ are given as in Figures \ref{fig:spinplanar5} and \ref{fig:spinplanar6} for $k$ even and odd respectively while a basis of $\text{dim}(P_{(0,-)})$ is given by $S(i)$ which will denote the $(0,-)$-tangle with a single internal $(0,-)$ box labelled $s_i$ (which appears on the right hand sides of the multiplication relation of Figure \ref{fig:spinplanar2} or of the unit relation of Figure \ref{fig:spinplanar3}).
\end{theorem}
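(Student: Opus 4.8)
The plan is to compare the generators-and-relations planar algebra $P = P(L,R)$ with Jones' concrete spin planar algebra $Q$ on the set $S$, extracting the dimension count from a diagrammatic normal-form argument together with the universal property of $P(L)$. First I would recall Jones' original model, in which $Q_{(k,\pm)}$ is the space of functions on the spin configurations of the appropriate shaded regions of a standard $k$-box, so that $Q_{(0,+)}$ is one-dimensional, $Q_{(0,-)}$ is $n$-dimensional, $\dim Q_{(k,\pm)} = n^k$ for $k>0$, $Q$ carries its standard $C^*$-structure, and its modulus is $\sqrt n$ --- all part of Jones' analysis. Then I would check, one figure at a time, that each defining relation of Figures \ref{fig:spinplanar1} and \ref{fig:spinplanar2} is satisfied by the images in $Q$ of the corresponding labelled tangles. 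By the universal property of $P(L)$, the assignment sending $s_i$ to the corresponding vector of $Q_{(0,-)}$ then extends to a morphism of planar algebras $\Phi : P \to Q$; since the $s_i$ already generate $Q$, the map $\Phi$ is surjective, and hence $\dim P_{(k,\pm)} \ge \dim Q_{(k,\pm)}$ in every degree.

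The heart of the proof is the reverse inequality: one must show that the finite collection of tangles displayed in Figures \ref{fig:spinplanar5} and \ref{fig:spinplanar6} (together with the empty tangle for $(0,+)$ and the tangles $S(i)$ for $(0,-)$) spans $P_{(k,\pm)}$. For this I would take an arbitrary element, written as a linear combination of labelled $(k,\pm)$-tangles, and reduce each labelled tangle to a combination of the listed ones by repeatedly applying, up to planar isotopy: the black and white modulus relations of Figures \ref{fig:spinplanar1} and \ref{fig:spinplanar3} to delete contractible loops of either shading; the multiplication relation of Figure \ref{fig:spinplanar2} to fuse two adjacent internal boxes labelled $s_i$ and $s_j$ into $\delta_{ij}$ times a single $s_i$-box; and the black channel relation of Figure \ref{fig:spinplanar2} together with the unit relations of Figures \ref{fig:spinplanar3} and \ref{fig:spinplanar4} (available by the Lemma above) to move internal discs toward the boundary and to deal with strands not yet in standard position. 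One then argues, by induction on a suitable complexity (number of internal boxes, number of closed loops, number of times the strands meet a fixed reference arc, $\dots$), that the process terminates in a linear combination of tangles in the normal form of Figures \ref{fig:spinplanar5} and \ref{fig:spinplanar6} --- or, in the degenerate cases, a scalar multiple of the empty tangle, resp.\ a combination of the $S(i)$. Counting these normal forms yields exactly $n^k$ for $k>0$, so $\dim P_{(k,\pm)} \le n^k$.

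Combining the two inequalities forces $\dim P_{(k,\pm)} = \dim Q_{(k,\pm)}$ in every degree; hence $\Phi$ is an isomorphism of planar algebras, $P$ is finite dimensional, and the spanning tangles of Figures \ref{fig:spinplanar5} and \ref{fig:spinplanar6} are in fact linearly independent and so constitute bases. Transporting the $C^*$-structure of $Q$ along $\Phi$ --- and checking it agrees with the $*$-operation on $P$ coming from reflection of tangles --- shows $P$ is a $C^*$-planar algebra, with modulus $\sqrt n$ read off from the white modulus relation of Figure \ref{fig:spinplanar1}. The step I expect to be genuinely delicate is the termination/normal-form analysis of the previous paragraph: making precise the inductive reduction of an arbitrary shaded labelled tangle to the standard pictures, with a careful case analysis of how the multiplication, channel and unit relations interact with the planar topology, is where all the real work lies; everything else is bookkeeping or a direct appeal to Jones' already-established facts about $Q$.
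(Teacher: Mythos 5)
This theorem is imported verbatim from \cite{KdyMrlSniSnd2019} and the present paper offers no proof of it; your two-inequality strategy --- checking the relations of Figures \ref{fig:spinplanar1} and \ref{fig:spinplanar2} in Jones's concrete spin planar algebra $Q$ to get a surjection $\Phi: P \to Q$ from the universal property, and then bounding $\dim P_{(k,\pm)}$ above by showing the tangles of Figures \ref{fig:spinplanar5} and \ref{fig:spinplanar6} span via reduction to normal form --- is exactly the route taken in that reference. The only caveat is that the termination of the normal-form reduction, which you correctly single out as the delicate step, remains a sketch here, and that inductive bookkeeping is where essentially all of the work in the cited proof resides.
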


\begin{figure}[!h]
\begin{center}
\psfrag{si1}{\Huge $s_{i_1}$}
\psfrag{si2}{\Huge $s_{i_2}$}
\psfrag{sik}{\Huge $s_{i_m}$}
\psfrag{sj1}{\Huge $s_{j_1}$}
\psfrag{sj2}{\Huge $s_{j_2}$}
\psfrag{sjk}{\Huge $s_{j_m}$}
\psfrag{sp}{\Huge $s_{p}$}
\psfrag{sq}{\Huge $s_{q}$}
\psfrag{sim}{\Huge $s_{i_m}$}
\psfrag{sikm1}{\Huge $s_{i_{m-1}}$}
\psfrag{sjkm1}{\Huge $s_{j_{2}}$}
\psfrag{fi}{\Huge $s_i$}
\psfrag{fj}{\Huge $s_j$}
\psfrag{cdots}{\Huge $\displaystyle{\cdots}$}
\psfrag{=}{\Huge $=$}
\psfrag{=dij}{\bf{\Huge $=\delta_{ij}$}}
\resizebox{9.0cm}{!}{\includegraphics{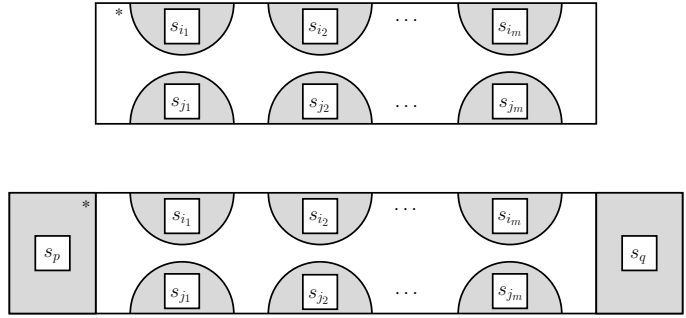}}
\end{center}
\caption{Bases ${\mathcal B}_{(2m,+)}$ for $m \geq 1$ and ${\mathcal B}_{(2m+2,-)}$ for $m \geq 0$}
\label{fig:spinplanar5}
\end{figure}

\begin{figure}[!h]
\begin{center}
\psfrag{si1}{\Huge $s_{i_1}$}
\psfrag{si2}{\Huge $s_{i_2}$}
\psfrag{sik}{\Huge $s_{i_m}$}
\psfrag{sj1}{\Huge $s_{j_1}$}
\psfrag{sj2}{\Huge $s_{j_2}$}
\psfrag{sjk}{\Huge $s_{q}$}
\psfrag{sp}{\Huge $s_{p}$}
\psfrag{sjmm1}{\Huge $s_{j_2}$}
\psfrag{sjm}{\Huge $s_{j_m}$}
\psfrag{sikm1}{\Huge $s_{i_{m}}$}
\psfrag{simp1}{\Huge $s_{j_{m}}$}
\psfrag{sjkm1}{\Huge $s_{j_{m}}$}
\psfrag{cdots}{\Huge $\displaystyle{\cdots}$}
\psfrag{fi}{\Huge $s_i$}
\psfrag{fj}{\Huge $s_j$}
\psfrag{cdots}{\Huge $\displaystyle{\cdots}$}
\psfrag{=}{\Huge $=$}
\psfrag{=dij}{\bf{\Huge $=\delta_{ij}$}}
\resizebox{8.0cm}{!}{\includegraphics{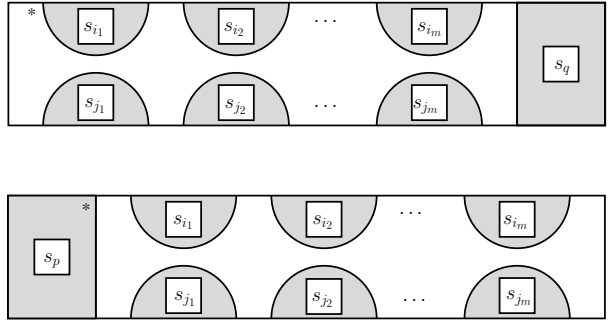}}
\end{center}
\caption{Bases ${\mathcal B}_{(2m+1,\pm)}$  for $m \geq 0$}
\label{fig:spinplanar6}
\end{figure}

%

\section{Biunitarity in planar algebras}

In this section we will first define the notion of biunitary element in a $*$- planar algebra. 
For each $k \in {\mathbb N}$, we have the rotation tangle $R(k,\epsilon) = R(k,\epsilon)_{(k,\epsilon)}^{(k,-\epsilon)}$ and its $\ell$-fold iteration  $R(k,\epsilon,\ell)$ given as in Figure \ref{fig:lrotation}.
\begin{figure}[h]
\centering
\psfrag{k-1}{\tiny $k-1$}
\psfrag{1}{\tiny $1$}
\psfrag{k-l}{\tiny $k-\ell$}
\psfrag{l}{\tiny $\ell$}
\psfrag{R}{$R(k,\epsilon)=R(k,\epsilon)_{(k,\epsilon)}^{(k,-\epsilon)}$}
\psfrag{R^l}{$R(k,\epsilon,\ell)=\displaystyle{R(k,\epsilon,\ell)_{(k,\epsilon)}^{(k,(-1)^\ell\epsilon)}}$}
\includegraphics[height=3.5cm]{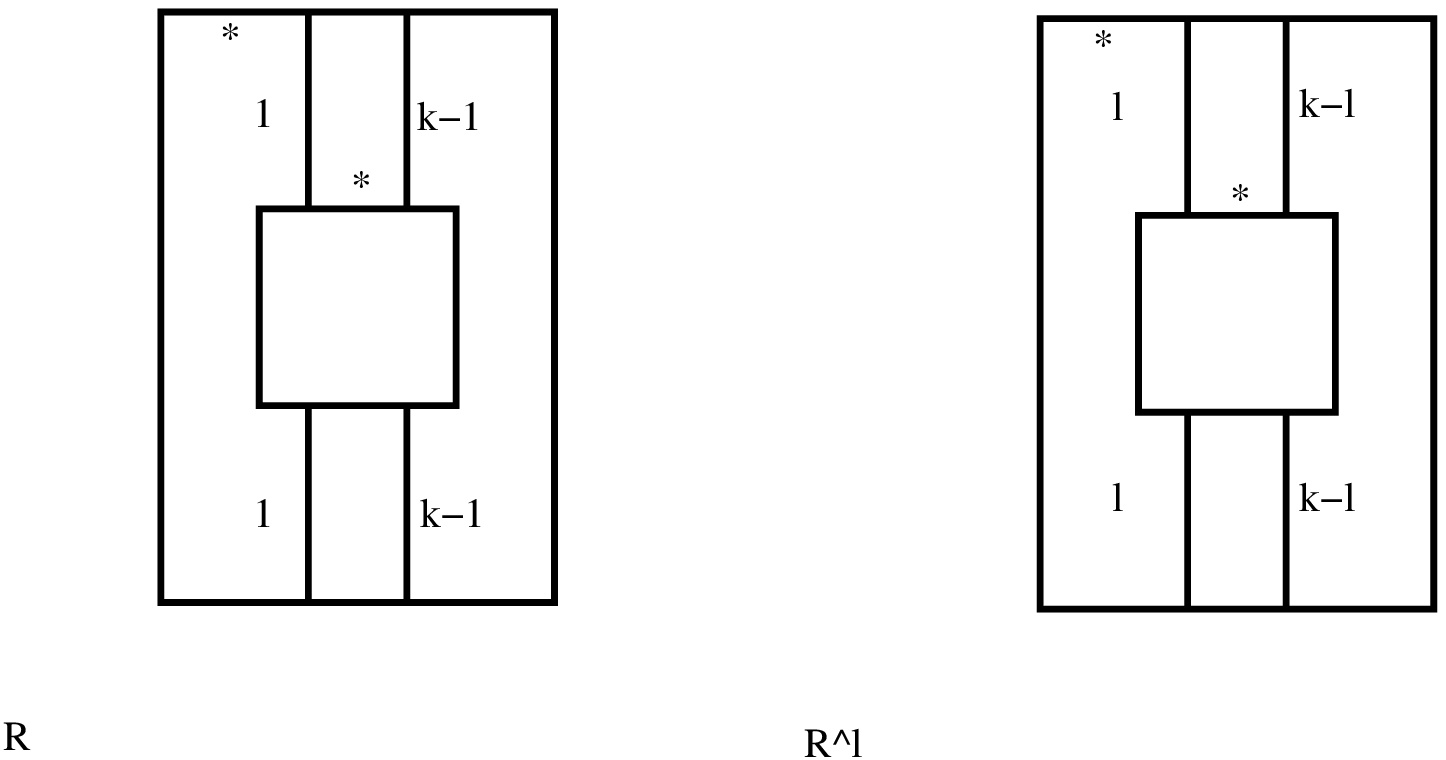}
\caption{Rotation tangles}
\label{fig:lrotation}
\end{figure}

In Figure \ref{fig:lrotation} and in the sequel we adopt two conventions: (i) $(-)^\ell$ denotes $\pm$ according to the parity of $\ell$, and (ii) In view of the difficulty of shading diagrams which depend on the parity of $\ell$, we will
dispense with shading figures since the shading is uniquely determined by the sub- and superscripts of the tangle.

\begin{definition}
Let $P$ be a $*$-planar algebra and let $u \in P_{(k,\epsilon)}$. For $0 < \ell < k$, the element $u$ is said to be a $\{0,\ell\}$-biunitary element if the elements $u \in P_{(k,\epsilon)}$  and $Z_{R(k,\epsilon, \ell)}(u) \in P_{(k,\epsilon(-)^\ell)}$  are both unitary.
\end{definition}
\begin{lemma}\label{lemma:eqcond}
The element $u \in P_{(k,\epsilon)}$ is $\{0,\ell\}$-biunitary if and only if the relations in Figure \ref{fig:unitary} hold in $P_{(k,\epsilon)}$.
\end{lemma}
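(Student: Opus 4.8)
The plan is to unwind both clauses of the definition --- unitarity of $u$ in $P_{(k,\epsilon)}$ and unitarity of $w := Z_{R(k,\epsilon,\ell)}(u)$ in $P_{(k,\epsilon(-)^{\ell})}$ --- into planar relations and then to recognise these as the ones displayed in Figure \ref{fig:unitary}. Throughout I would use two structural facts about a $*$-planar algebra $P$: (i) $Z$ is a morphism of planar algebras, so that composition of tangles (gluing a tangle into an internal box of another) corresponds to composition of the associated multilinear maps; and (ii) the $*$-structure is implemented by reflection, i.e. $Z_T(x_1,\dots,x_r)^* = Z_{T^*}(x_1^*,\dots,x_r^*)$, where $T^*$ is the reflected tangle. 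I would also record that the rotation tangle is invertible, with $R(k,\epsilon,\ell)^{*} = R(k,\epsilon,\ell)^{-1}$ the oppositely-oriented rotation; hence $Z_{R(k,\epsilon,\ell)}$ is a bijection $P_{(k,\epsilon)} \to P_{(k,\epsilon(-)^{\ell})}$ carrying the unit (the through-string tangle) to the unit. Recall that unitarity of an element $x$ means the two equations $xx^* = 1$ and $x^*x = 1$.

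First I would dispose of the unitarity of $u$ itself. By the very definitions of the multiplication tangle $M$ and of $*$ in $P_{(k,\epsilon)}$, the products $u^*u$ and $uu^*$ are $Z$ applied to the tangles obtained by stacking a reflected copy of the $u$-box on a copy of the $u$-box and joining the appropriate block of $k$ strings, while $1 \in P_{(k,\epsilon)}$ is $Z$ of the identity tangle. Hence ``$u$ is unitary'' is, with no further work, exactly the relations in Figure \ref{fig:unitary} that express unitarity of $u$.

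The substance of the lemma is the second clause. Writing $R = R(k,\epsilon,\ell)$ and $w = Z_R(u)$, we have $w^* = Z_{R^*}(u^*) = Z_{R^{-1}}(u^*)$ by (ii), and since $Z_{R^{-1}}$ is a bijection fixing the units, $w^*w = 1$ in $P_{(k,\epsilon(-)^{\ell})}$ if and only if $Z_{R^{-1}}(w^*w) = 1$ in $P_{(k,\epsilon)}$. By (i) the left-hand side is $Z_T(u^*,u)$ for the explicit composite tangle
\begin{equation*}
T \;=\; R^{-1} \circ M \circ \big(R^{-1},\, R\big),
\end{equation*}
where $R^{-1}$ is plugged into the first input box of $M$, $R$ into the second, and a further $R^{-1}$ glued onto the output box of $M$. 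It then remains to simplify $T$ by a planar isotopy: dragging the rotation boxes through, one finds that the $\ell$ rotated strings recombine so that the two copies of the $u$-box become joined along $k-\ell$ strings on one side and $\ell$ strings on the other --- precisely the ``crossed'' configuration appearing on the left of the corresponding relation of Figure \ref{fig:unitary}, whose right-hand side is again the identity tangle. The same computation applied to $ww^* = 1$ yields the remaining relation, and reading the equivalences backwards gives the converse.

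The one genuinely non-formal point, and the place I expect to do real work, is that planar-isotopy simplification of the rotation--multiplication--rotation composite $T$: one must track carefully which block of strings (the $\ell$ rotated ones versus the remaining $k-\ell$) gets cabled where, and do so without the aid of shading, since --- as is noted just before the statement --- the shading of these diagrams depends on the parity of $\ell$ and has been suppressed. Everything else is bookkeeping from the axioms of a $*$-planar algebra.
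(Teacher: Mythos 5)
Your argument is correct and is essentially the paper's proof, only spelled out in more detail: the paper's one-line proof ("after choosing the external $*$-arc appropriately\ldots") is exactly your observation that conjugating the equation $w^*w=1$ by the unit-preserving bijection $Z_{R(k,\epsilon,\ell)}^{-1}$ and simplifying the resulting composite tangle by isotopy yields the crossed relations of Figure \ref{fig:unitary}, while the unitarity of $u$ itself gives the stacked ones. The extra care you flag about tracking the $\ell$ versus $k-\ell$ blocks of strands is precisely what the paper compresses into "choosing the $*$-arc appropriately," so there is nothing further to add.
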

%
%
\begin{figure}[h]
%
%
\psfrag{kml}{\tiny{$k-\ell$}}
\psfrag{l}{\tiny{$\ell$}}
\psfrag{u}{\small $u$}
\psfrag{u*}{\small $u^*$}
\psfrag{=}{\tiny{$=$}}
\includegraphics[height=6.5cm]{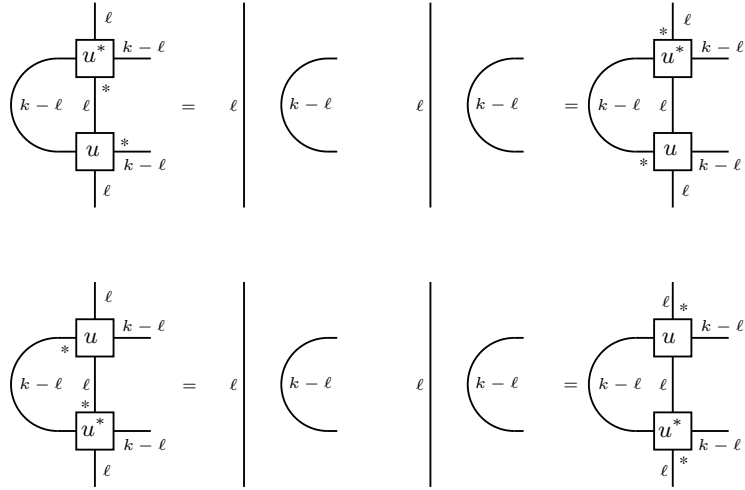}
\caption{$\{0,\ell\}$-biunitarity relations}
\label{fig:unitary}
\end{figure}

\begin{proof}
After choosing the external $*$-arc appropriately, the relations on top in Figure \ref{fig:unitary} are
equivalent to the unitarity of $u$ while the relations on the bottom are equivalent to the unitarity of 
$Z_{R(k,\epsilon, \ell)}(u)$. \end{proof}

\begin{remark}
Observe that if $u \in P_{(k,\epsilon)}$ is a $\{0,\ell\}$-biunitary element, then so are $Z_{R(k,\epsilon,k)}(u) \in P_{(k,\epsilon(-)^k)}$ and
$Z_{R(k,\epsilon,-\ell)}(u^*) \in  P_{(k,\epsilon(-)^\ell)}$.
\end{remark}

In the rest of this section we will show that certain biunitary elements in the spin planar algebra are in one-to-one correspondence with some objects that arise in quantum information theory. We first define these objects.

\begin{definition}
An $n \times n$ complex matrix $H = ((h_{ij}))$  is said to be a complex Hadamard matrix if $HH^* = nI$  and
 $|h_{ij}| = 1$ for each $i,j$.
\end{definition} 

\begin{example}
Let $\omega \in {\mathbb C}$ be a primitive $n^{th}$-root of unity. The matrix
$$\displaystyle{H=\begin{bmatrix}
1 & 1 & \cdots & 1 \\
1 & \omega & \cdots & \omega^{n-1}\\
1 & \omega^2 & \cdots & (\omega^2)^{n-1}\\
\vdots & \vdots& \cdots & \vdots \\
1 & \omega^{n-1} & \cdots & (\omega^{n-1})^{n-1}
\end{bmatrix}}$$
is a complex Hadamard matrix which is a multiple of the so-called Fourier matrix.
\end{example}
\begin{definition}
A Latin square is an $n \times  n$  array filled with n different symbols, each occurring exactly once in each row and  in each column.
\end{definition}
\begin{example}\label{latin}
The multiplication table of a finite group is a Latin square. The smallest example which is not (equivalent to one) of this type
is of size 5 and is given by:
$$\begin{bmatrix}
 1 & 2 & 3 & 4 & 5 \\
 2 & 4 & 1 & 5 & 3 \\
 3 & 5 & 4 & 2 & 1 \\
 4 & 1 & 5 & 3 & 2 \\
 5 & 3 & 2 & 1 & 4
\end{bmatrix}.
$$ 
\end{example}
\begin{definition}
A quantum Latin square of size $n$ is an $n \times n$ matrix of vectors in $\mathbb{C}^n$ such that each row and each column is an orthonormal basis for $\mathbb{C}^n$.
\end{definition}
\begin{example}\label{lstoqls}
Any Latin square gives a quantum Latin square in the following simple-minded way. 
Let $\{e_1, \hdots, e_n\}$ be the standard orthonormal basis  of $\mathbb{C}^n$. Consider the $5 \times 5$ Latin square in the Example \ref{latin}. It gives the following quantum Latin square:
$$\begin{bmatrix}
 e_1 & e_2 & e_3 & e_4 & e_5 \\
 e_2 & e_4 & e_1 & e_5 & e_3 \\
 e_3 & e_5 & e_4 & e_2 & e_1 \\
 e_4 & e_1 & e_5 & e_3 & e_2 \\
 e_5 & e_3 & e_2 & e_1 & e_4
\end{bmatrix}$$\\ 
For more on quantum Latin squares and  non-trivial examples see \cite{MstVcr2015}.
\end{example}
\begin{definition}
A matrix $U = ((u^{ij}_{kl})) \in M_{n^2}(\mathbb{C})$ (for $i,j,k,l \in \{1,\cdots,n\}$) is said to be a biunitary matrix if both $U$ and its block transpose, say $V = ((v^{ij}_{kl}))$, defined by $v^{ij}_{kl} = u^{kj}_{il}$, are unitary matrices.
\end{definition}
\begin{example}
For examples of  biunitary matrices of size 9 which are, in addition, permutation matrices, and their applications
 in subfactor theory see \cite{KrsSnd1996}.
\end{example}

\begin{definition}
A unitary error basis for $M_n({\mathbb C})$ is a collection of $n^2$ unitary matrices which form an orthonormal basis with respect to the normalised trace inner product given by $\displaystyle{\langle A|B \rangle= \frac{Tr(B^*A)}{n}}$.
\end{definition}

\begin{example} The matrices $\{U^iV^j: 1 \leq i,j \leq n\}$, where $U$ is the $n\times n$ Fourier matrix and $V$ is the permutation matrix corresponding to the cycle $(1~2~\cdots~n)$,  form a unitary error basis.
\end{example}


We will now state and prove the main theorem of this section. 
Let $P = P(S)$ be the spin planar algebra where $S = \{s_1,\cdots,s_n\}$. We will need some notation for the normalised version of the bases for $P_{(k,\pm)}$ for $k=1,\hdots$ given in Figures \ref{fig:spinplanar5} and \ref{fig:spinplanar6}.
We denote $(\sqrt{n})^{m}$ times the elements on the top and bottom in Figure \ref{fig:spinplanar5} by $e^{i_1\cdots i_m}_{j_1\cdots j_m}$ and $e[p)^{i_1 \cdots i_{m}}_{j_1 \cdots j_m}(q]$ respectively. Similarly, we denote $(\sqrt{n})^{m}$ times the
elements on the top and bottom in Figure \ref{fig:spinplanar6} by $e^{i_1\cdots i_m}_{j_1\cdots j_m}(q]$ and $e[p)^{i_1 \cdots i_{m}}_{j_1 \cdots j_m}$. In the following proof we will implicitly use the multiplication relations among these elements as in Lemma~10 of \cite{KdyMrlSniSnd2019}. We will also require the
action of the rotation tangle on the bases as stated in Lemma \ref{lemma:rotation} below (in the proof of Equations \ref{uunitary1}, \ref{br2} and \ref{r2uunitary3}).

\begin{lemma}\label{lemma:rotation} With notation as above,
\begin{eqnarray*}
Z_{R(2m,+)}(e^{i_1\cdots i_m}_{j_1\cdots j_m}) &=& \sqrt{n}~ e[j_1)^{i_1 \cdots i_{m-1}}_{j_2 \cdots j_m}(i_m] \\
Z_{R(2m+2,-)}(e[p)^{i_1 \cdots i_{m}}_{j_1 \cdots j_m}(q]) &=& \frac{1}{\sqrt{n}}~ e^{p\ \!\! i_1\cdots i_m}_{j_1\cdots j_m q}\\
Z_{R(2m+1,+)}(e^{i_1\cdots i_m}_{j_1\cdots j_m}(q]) &=& e[j_1)^{i_1\  \cdots \ i_{m}}_{j_2 \cdots j_m q}\\
Z_{R(2m+1,-)}(e[p)^{i_1 \cdots i_{m}}_{j_1 \cdots j_m}) &=& e^{p i_1\cdots i_{m-1}}_{j_1\ \cdots \ j_m}(i_m]\\
\end{eqnarray*}
\end{lemma}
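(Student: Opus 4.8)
The plan is to prove each of the four identities by a direct pictorial computation, drawing the rotation tangle $R(k,\epsilon)$ (or its iterate) composed with the relevant basis element and then normalising the resulting picture to a standard basis element using only the defining relations of $P$ and the consequences recorded in the two lemmas of \cite{KdyMrlSniSnd2019}. Concretely, for the first identity I would take the basis element $e^{i_1\cdots i_m}_{j_1\cdots j_m}$, which is $(\sqrt n)^m$ times the $(2m,+)$-tangle of Figure~\ref{fig:spinplanar5}, place it inside the $(2m,+)$-rotation tangle $R(2m,+)$ of Figure~\ref{fig:lrotation}, and isotope the internal boxes around. The rotation shifts the positions of the labelled boxes cyclically by one; after the isotopy the box labelled $s_{i_m}$ and the box labelled $s_{j_1}$ that were "interior" now land on the outer boundary, which is exactly the shape of the bracketed basis element $e[j_1)^{i_1\cdots i_{m-1}}_{j_2\cdots j_m}(i_m]$ of Figure~\ref{fig:spinplanar5}. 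I would then keep careful track of the normalisation constant: the target element carries $(\sqrt n)^{m}$ as well (it is the $(\sqrt n)^m$-normalised version of a $(2m+2,-)$ basis picture), but the isotopy typically creates or removes one closed loop or one cap/cup that the modulus relation (Figure~\ref{fig:spinplanar1}) evaluates to a power of $\sqrt n$; reconciling $(\sqrt n)^m$ on the source side with $(\sqrt n)^{m}$ on the target side forces the extra factor $\sqrt n$ that appears on the right-hand side.

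For the second identity I would do the mirror-image computation: $R(2m+2,-)$ applied to the $(2m+2,-)$ bracketed element $e[p)^{i_1\cdots i_m}_{j_1\cdots j_m}(q]$, isotoping so that the boundary boxes labelled $s_p$ and $s_q$ move to interior positions, producing the $(2m+2,-)$ unbracketed element $e^{p\,i_1\cdots i_m}_{j_1\cdots j_m q}$; here the loop-count bookkeeping runs the other way and produces the factor $\tfrac1{\sqrt n}$. This is consistent with the Remark's observation that rotation by $k$ and the obvious back-and-forth are mutually inverse operations up to the modulus, so the constants in identities one and two must be reciprocal. The two odd-case identities (third and fourth) are entirely analogous but with one fewer interior box on one side, which is why no modulus factor survives and the constant is $1$; I would treat them by the same isotopy argument, using the odd-$k$ bases of Figure~\ref{fig:spinplanar6}.

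The main obstacle is purely bookkeeping rather than conceptual: getting the index shifts exactly right (which label becomes a left bracket $[j_1)$, which becomes a right bracket $(i_m]$, and in which order the remaining superscripts $i_1\cdots i_{m-1}$ and subscripts $j_2\cdots j_m$ appear) and simultaneously getting the power of $\sqrt n$ right. The safest way to do this cleanly is to first verify the identities in the smallest nontrivial cases — $m=1$ for the even identities, $m=0$ for the odd ones — where the pictures are small enough to evaluate by hand against Figures~\ref{fig:spinplanar5} and~\ref{fig:spinplanar6} directly, and then argue that the general case differs only by the passive presence of the untouched through-strands carrying $s_{i_2},\dots$ and $s_{j_2},\dots$, which the rotation isotopes without modification. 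An alternative, and perhaps the most economical route, is to use the $C^*$-structure from Theorem~\ref{spinthm}: since $Z_{R(k,\epsilon)}$ is (up to a fixed power of the modulus) a $*$-isomorphism of the relevant matrix algebras $P_{(k,\pm)}\cong M_{n^{\lceil k/2\rceil}}(\mathbb C)$, it is determined by its effect on the matrix units, so it suffices to identify the normalised basis elements with matrix units (as in Lemma~10 of \cite{KdyMrlSniSnd2019}), compute the rotation on \emph{one} matrix unit pictorially, and invoke linearity and multiplicativity; I would mention this as the conceptual underpinning while carrying out the explicit pictorial check for concreteness.
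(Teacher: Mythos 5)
Your main line of attack --- isotope the labelled basis picture inside the rotation tangle, identify the result with a standard basis picture, and track the normalisation --- is exactly the (omitted) proof the paper intends: the lemma is stated without proof as a routine pictorial verification, and your plan of checking the smallest cases and observing that the extra through-strands are carried along passively is a perfectly adequate way to organise it. Two corrections are needed, however. First, your bookkeeping of the constants is slightly off: in the first identity the target $e[j_1)^{i_1\cdots i_{m-1}}_{j_2\cdots j_m}(i_m]$ has only $m-1$ upper indices, so it is the $(\sqrt n)^{m-1}$-normalised version of a $(2m,-)$ basis picture (not a $(2m+2,-)$ one carrying $(\sqrt n)^m$); the rotation is a pure isotopy creating no closed loops, and the factor $\sqrt n$ arises entirely from the mismatch $(\sqrt n)^m/(\sqrt n)^{m-1}$ between the normalisation conventions of source and target. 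The same accounting gives $1/\sqrt n=(\sqrt n)^m/(\sqrt n)^{m+1}$ in the second identity and $1$ in the two odd ones, so no appeal to the modulus relation is needed. Second, your proposed ``most economical route'' is invalid: the one-click rotation $Z_{R(k,\epsilon)}$ is \emph{not} an algebra homomorphism, and indeed cannot be one here, since for the spin planar algebra $P_{(2m,+)}\cong M_{n^m}(\mathbb C)$ while $P_{(2m,-)}$ is a direct sum of $n^2$ copies of $M_{n^{m-1}}(\mathbb C)$ (as the multiplication rule for the bracketed elements $e[p)^{\cdots}_{\cdots}(q]$ shows); so you cannot compute the rotation on one matrix unit and extend by multiplicativity. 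Only linearity is available, and you must compute the rotation on every basis element --- which your primary pictorial argument does in any case, so the proof stands once that aside is discarded.
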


\begin{theorem}{\label{spinquantumthm}}
There are natural 1-1 correspondences between the following sets:\\
1) $\{0,1\}$-biunitary elements in $P_{(2,+)}$ and Hadamard matrices of size $n \times n$,\\
2) $\{0,1\}$-biunitary elements in  $P_{(3,+)}$ and quantum Latin squares of size $n \times n $, and\\
3) $\{0,2\}$-biunitary elements in $P_{(4,+)}$ and biunitary matrices of size $n^2 \times n^2$.\\
%
%
%
%
\end{theorem}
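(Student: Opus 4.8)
The plan is to treat each of the three correspondences in turn, in each case using the explicit basis of $P_{(k,+)}$ from Theorem \ref{spinthm} to write a general element $u$ as a matrix of coefficients, then transcribe the two unitarity conditions of Lemma \ref{lemma:eqcond} (one for $u$, one for its appropriate rotation $Z_{R(k,+,\ell)}(u)$) into matrix equations using the multiplication relations of \cite[Lemma 10]{KdyMrlSniSnd2019} and the rotation formulas of Lemma \ref{lemma:rotation}. The upshot in each case should be that ``$u$ unitary'' is literally one of the defining matrix conditions for the combinatorial object and ``rotated-$u$ unitary'' is the other, so that the correspondence is just bookkeeping.

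Concretely, for (1): an element $u \in P_{(2,+)}$ expands as $u = \frac{1}{\sqrt n}\sum_{i,j} h_{ij}\, e^i_j$ (the normalisation chosen so that the structure constants are clean). Using $e^i_j e^k_l = \delta^{j}_{k} e^i_l$ type relations, $u^* u = $ (identity) $\Leftrightarrow$ the rows/columns of $((h_{ij}))$ are orthogonal, i.e. $H H^* = nI$ after accounting for the $\frac1{\sqrt n}$; and by Lemma \ref{lemma:rotation}, $Z_{R(2,+,1)}(u)$ has coefficients essentially $h_{ij}$ read off an $e[\cdot)\cdot(\cdot]$-basis, whose unitarity forces $|h_{ij}|=1$ (the ``rotated'' multiplication pairs the single index, turning the unitarity condition into $\sum$ of $|h_{ij}|^2$-type diagonal constraints). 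For (2): $u \in P_{(3,+)}$ expands in the basis $e^{i}_{j}(q]$, so its coefficients form an $n\times n$ array of vectors $v_{iq} = (u^{i}_{j}(q])_j \in \mathbb C^n$; unitarity of $u$ says one index-pair of rows/columns is orthonormal, unitarity of $Z_{R(3,+,1)}(u)$ (which by Lemma \ref{lemma:rotation} lands in $e[j)^{i}_{q}$) says the other is — together exactly the quantum Latin square conditions. For (3): $u \in P_{(4,+)}$ expands in $e^{i_1 i_2}_{j_1 j_2}$, coefficients $((u^{i_1 i_2}_{j_1 j_2}))$; unitarity of $u$ is one unitarity of the $n^2\times n^2$ matrix, and $Z_{R(4,+,2)}(u)$ — a two-fold rotation — by Lemma \ref{lemma:rotation} has coefficients obtained by swapping one upper with one lower index, which is precisely the block transpose $v^{ij}_{kl} = u^{kj}_{il}$; so $\{0,2\}$-biunitarity is exactly biunitarity of the matrix.

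In all three parts I would also note the inverse map: given the combinatorial object, define $u$ by the displayed expansion, and observe that the same computations run backwards, so the correspondence is a genuine bijection (and is ``natural'' in that it is linear in the matrix entries). The only mild subtlety is matching conventions: which external $*$-arc is chosen in Lemma \ref{lemma:eqcond}, and the powers of $\sqrt n$ from the modulus relations of Figures \ref{fig:spinplanar1}--\ref{fig:spinplanar4}, so that $H H^* = nI$ rather than $H H^* = I$ (Hadamard) but orthonormality rather than mere orthogonality for quantum Latin squares. I would fix the normalisation of the $e$-basis (as already set up before Lemma \ref{lemma:rotation}, with the $(\sqrt n)^m$ factors) precisely so these constants come out matching the stated definitions.

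The main obstacle I expect is purely diagrammatic bookkeeping: correctly reading off, from Figure \ref{fig:unitary}, what the two unitarity relations say in coordinates after the rotation — in particular getting the index permutation in the rotated multiplication right (this is where an off-by-one in the cyclic rotation would silently give the transpose instead of the block transpose, or swap ``rows'' with ``columns''). Once Lemma \ref{lemma:rotation} and the multiplication relations of \cite{KdyMrlSniSnd2019} are invoked carefully, each of the three verifications is a short finite computation with no analytic content.
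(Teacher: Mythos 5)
Your proposal is correct and follows essentially the same route as the paper: expand $u$ in the normalised bases $e^{\cdots}_{\cdots}$, use the multiplication relations of Lemma~10 of \cite{KdyMrlSniSnd2019} and Lemma~\ref{lemma:rotation} to transcribe unitarity of $u$ and of its (iterated) rotation into the two defining matrix conditions, and observe that the assignment is a bijection; your normalisation $u=\frac{1}{\sqrt n}\sum h_{ij}e^i_j$ in part (1) is just the paper's substitution $H=((\sqrt n\,a^i_j))$ done in advance. The one point you flag as a risk --- that the two-fold rotation in part (3) swaps one upper with one lower index to give the block transpose rather than the full transpose --- is handled exactly as you predict via Lemma~\ref{lemma:rotation}, so no gap remains.
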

\begin{proof}
1) Let $\displaystyle{u=\sum_{i,j} a^i_j~~e^i_j\in P_{(2,+)}}$.
 Then, 
\begin{equation}\label{eqn1}
uu^*=1 \Leftrightarrow \displaystyle{\sum_j a^i_j~~\overline{a^p_j}=\delta_{i,p}} ~~\forall~ i, p=1,\hdots,n,
\end{equation}
and 
\begin{equation}\label{uunitary1}
Z_{R(2,+)}(u)(Z_{R(2,+)}(u))^*=1 \Leftrightarrow n~a^i_j~~\overline{a^i_j}=1 ~~\forall~ i, j=1,\hdots,n.
\end{equation}
Let $H = ((\sqrt{n} a^i_j))$. Then Equation \ref{eqn1} is equivalent to $HH^* = n~I$ and Equation \ref{uunitary1} is equivalent to $|\sqrt{n} a^i_j| = 1$.
Thus the association of $u$ with $H$ is a 1-1 correspondence between $\{0,1\}$-biunitary elements of $P_{(2,+)}$ and Hadamard matrices of size $n \times n$.\\
%
%
%
%
%

2) Now let $\displaystyle{u=\sum_{i,j,k}a_{ij}^k~~e^i_k(j] \in P_{(3,+)}}$. Then, 
\begin{equation}\label{uunitary2}
uu^*=1 \Leftrightarrow \displaystyle{\sum_k~a_{ij}^k~~\overline{a_{pj}^k}=\delta_{i,p},~~\forall ~~j,i,p=1,\hdots, n},
\end{equation}
and 
\begin{equation}\label{br2}
Z_{R(3,+)}(u)(Z_{R(3,+)}(u))^*=1 \Leftrightarrow \displaystyle{\sum_j~a_{ij}^k~~\overline{a_{pj}^k}=\delta_{i,p},~~\forall ~~k,i,p=1,\hdots, n}.
\end{equation}
Let $Q = ((Q^i_j))$ where $Q^i_j=(a_{1j}^i,a_{2j}^i,\hdots,a_{nj}^i) \in {\mathbb C}^n$. Then, 
Equation \ref{uunitary2} is equivalent to the column vectors of $Q$ forming an orthonormal basis for $\mathbb{C}^{n}$ and 
Equation \ref{br2} is equivalent to the row vectors of $Q$ forming an orthonormal basis for $\mathbb{C}^{n}$.
Thus the association of $u$ with $Q$ is a 1-1 correspondence between $\{0,1\}$-biunitary elements of $P_{(3,+)}$ and quantum Latin squares of size $n \times n$.\\
%
%
%
%
%
%
%

3) Let $\displaystyle{u= \sum_{i,j,k,\ell}a^{ij}_{k\ell}e^{ij}_{\ell  k} \in P_{(4,+)}}$. Then,
\begin{equation}\label{uunitary3}
uu^*=1 \Leftrightarrow \displaystyle{\sum_{k,\ell}~~a^{ij}_{k\ell}~~\overline{a^{pq}_{k\ell}}=\delta_{i,p}~~\delta_{j,q}},~~\forall~~i,j,p,q=1,\hdots,n
\end{equation}
and 
\begin{equation}\label{r2uunitary3}
Z_{R(4,+,2)}(u)(Z_{R(4,+,2)}(u))^*=1 \Leftrightarrow \displaystyle{\sum_{j,k}~~a^{ij}_{k\ell}~~\overline{a^{pj}_{ks}}=\delta_{i,p}~~\delta_{\ell,s}},~~\forall~~i,p,\ell,s=1,\hdots,n
\end{equation}
Here let $U = ((a^{ij}_{k\ell}))$.
Then, 
Equation \ref{uunitary3} is equivalent to $U$ being unitary and 
Equation \ref{r2uunitary3} is equivalent to the block transpose of $U$ being unitary.
Thus the association of $u$ with $U$ is a 1-1 correspondence between $\{0,2\}$-biunitary elements of $P_{(4,+)}$ and biunitary matrices of size $n^2 \times n^2$.
\end{proof}
%

An analogous 1-1 correspondence result for unitary error bases requires a modified version of the notion of biunitary element which we will now define. Recall that a labelled annular tangle in a planar algebra $P$ is a tangle $A$ all of whose internal boxes except for one have been labelled by elements of the appropriate $P_{(k, \epsilon)}$'s. Actually we would like to consider linear extensions of the definition and of the vector
operations to linear combinations of such tangles, provided of course that all the annular tangles involved yield maps between the same spaces. We will use the term modified annular tangle for such  linear combinations.


%
%

\begin{definition}
Let $P$ be a $*$- planar algebra and  $A,B$ be modified annular tangles with their unlabelled box  of colour $(k,\epsilon)$. An element $u \in P_{(k,\epsilon)}$ 
is said to be an $\{A,B\}$-biunitary element if $Z_A(u)$ and $Z_B(u)$ 
are both unitary.
\end{definition}

\begin{remark}
Note that a $\{0,\ell\}$-biunitary element in $P_{(k,\epsilon)}$ is nothing but an $\{I,R(k,\epsilon,\ell)\}$-biunitary element, where $I$ denotes the identity tangle of colour $(k,\epsilon)$.
\end{remark}


In order to state the analogue of Theorem \ref{spinquantumthm} for unitary error bases we will need the modified annular tangle $A = A_{(4,+)}^{(4,+)}$, with labelled internal boxes coming from the  spin planar algebra $P$, defined by Figure \ref{fig:uebpicture}.
\begin{figure}[h]
\centering
\psfrag{si}{\tiny $s_i$}
\psfrag{sj}{\tiny $s_j$}
\psfrag{n}{$n$}
\psfrag{sum_ij}{\large $\displaystyle{\sum_{i,j=1}^n}$}
\psfrag{*}{\tiny $*$}
\includegraphics[height=3.5cm]{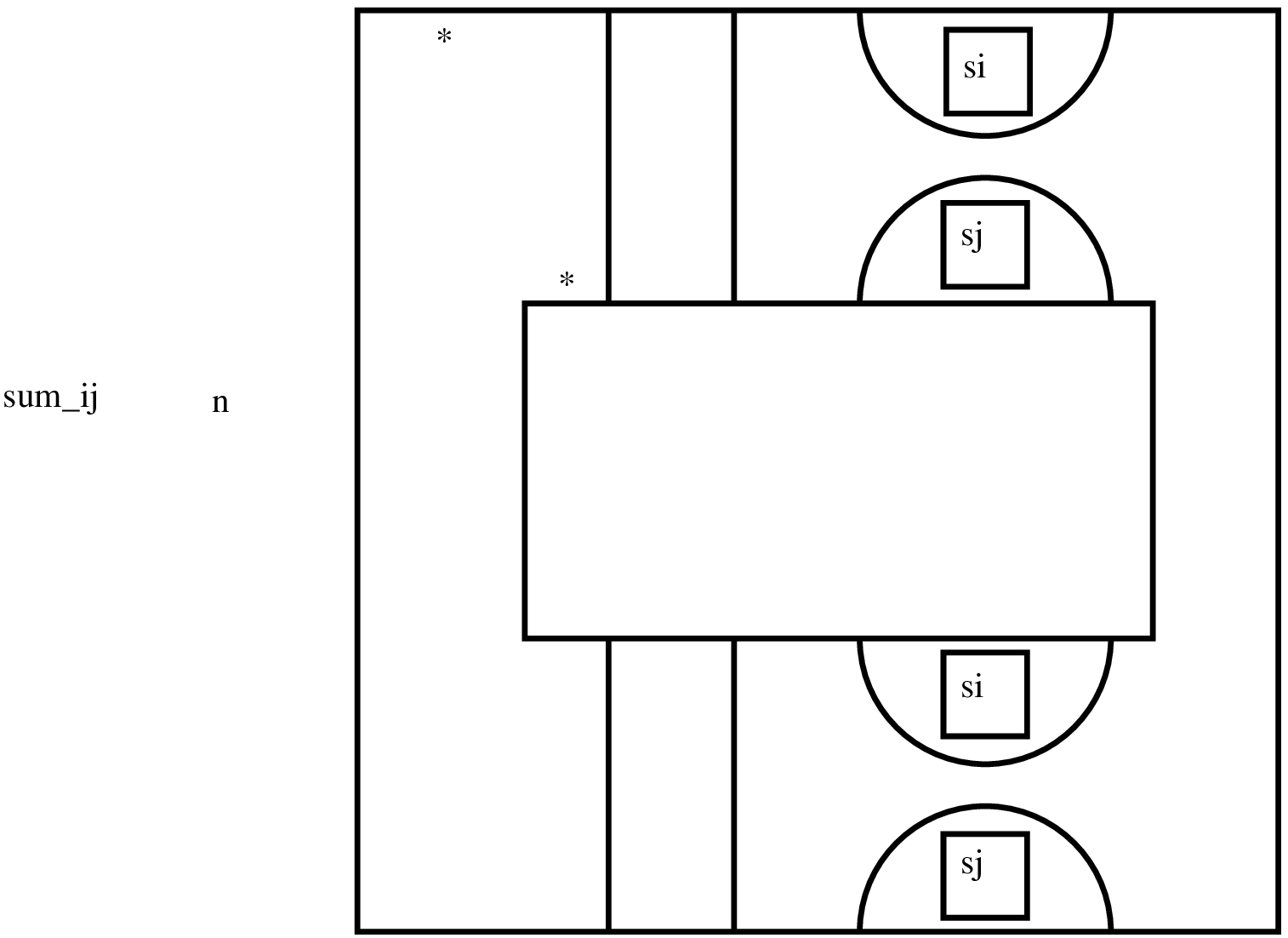}
\caption{The annular tangle $A$}
\label{fig:uebpicture}
\end{figure}

\noindent
Observe that $Z_A(e^{ij}_{kl}) = e^{il}_{kj}$.

\begin{proposition}
There is a natural 1-1 correspondence between $\{A,R(4,+)\}$-biunitary elements in $P_{(4,+)}$ and unitary error bases  in $M_n({\mathbb C})$.
\end{proposition}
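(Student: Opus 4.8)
The plan is to mimic the proof of part (3) of Theorem~\ref{spinquantumthm}, replacing the ordinary block-transpose map by the map $Z_A$ described in Figure~\ref{fig:uebpicture}, and replacing ``unitary'' by ``orthonormal error basis''. First I would expand a general element $u \in P_{(4,+)}$ in the normalised basis $\{e^{ij}_{k\ell}\}$, writing $u = \sum_{i,j,k,\ell} a^{ij}_{k\ell}\, e^{ij}_{\ell k}$ (matching the indexing convention used in the proof of (3)), so that the coefficients $a^{ij}_{k\ell}$ are precisely the entries of an $n^2 \times n^2$ matrix $U$. As computed there, the condition $uu^* = 1$ is equivalent to $U$ being unitary, i.e. to $\sum_{k,\ell} a^{ij}_{k\ell}\,\overline{a^{pq}_{k\ell}} = \delta_{i,p}\delta_{j,q}$. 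The idea is to reinterpret the matrix $U$ not as a single $n^2 \times n^2$ unitary but as a family of $n^2$ matrices $E_{k\ell} \in M_n(\mathbb{C})$ with $(E_{k\ell})_{ij} = a^{ij}_{k\ell}$; then unitarity of $U$ says exactly that $\{E_{k\ell} : 1 \le k,\ell \le n\}$ is orthonormal for the trace inner product $\langle A|B\rangle = \frac{1}{n}\mathrm{Tr}(B^*A)$ — note the normalising $n$ appears because the $e^{ij}_{k\ell}$ are the $(\sqrt n)^m$-scaled basis vectors, which is the same bookkeeping that produced the factors of $\sqrt n$ in part (1).

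Next I would compute the second condition. Using the stated fact $Z_A(e^{ij}_{k\ell}) = e^{i\ell}_{kj}$ and linearity of $Z_A$ on modified annular tangles, one gets $Z_A(u) = \sum a^{ij}_{k\ell}\, e^{i k}_{\ell j}$ after relabelling, so that $Z_A(u)$ has matrix entries $b^{ij}_{k\ell} = a^{i\ell}_{kj}$ (up to a transposition of indices that I would pin down carefully against Figure~\ref{fig:uebpicture}). Then $Z_A(u) Z_A(u)^* = 1$ becomes a bilinear relation on the $a$'s; translated into the matrices $E_{k\ell}$, this should say precisely that each $E_{k\ell}$ is unitary, $E_{k\ell} E_{k\ell}^* = I$. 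Meanwhile, since $\{0,1\}$-biunitarity would involve $R(4,+)$, here only $Z_A(u)$ and $Z_{R(4,+)}(u)$ need be unitary; I would double-check whether the relevant second tangle is $R(4,+)$ or $R(4,+,2)$ — the statement says $R(4,+)$, a single rotation, and I expect this to give the ``each block unitary'' condition while $Z_A$ gives orthonormality, or vice versa. Once both conditions are identified, the correspondence $u \leftrightarrow \{E_{k\ell}\}$ is manifestly a bijection between $\{A, R(4,+)\}$-biunitary elements and collections of $n^2$ unitaries forming an orthonormal basis of $M_n(\mathbb{C})$, i.e. unitary error bases.

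The routine part is the index bookkeeping: one must be consistent about which of $i,j,k,\ell$ are ``input'' versus ``output'' indices of the box, how $Z_A$ permutes them, and where the $\sqrt n$ normalisation factors land; I would also use the multiplication relations among the $e^{ij}_{k\ell}$ (Lemma~10 of \cite{KdyMrlSniSnd2019}) and the action of the rotation tangle from Lemma~\ref{lemma:rotation} to evaluate $Z_{R(4,+)}(u)\,(Z_{R(4,+)}(u))^*$, exactly as in the proof of Theorem~\ref{spinquantumthm}(3). The main obstacle, and the only genuinely non-mechanical step, is verifying the claimed identity $Z_A(e^{ij}_{k\ell}) = e^{i\ell}_{kj}$ and correctly reading off from Figure~\ref{fig:uebpicture} how the annular tangle $A$ (with its internal boxes labelled by $s_i, s_j$, the sum over $i,j$, the $*$, and the factor $n$) acts on a general basis element — once that pictorial computation is correct, the translation to ``$n^2$ unitaries that are trace-orthonormal'' is immediate, and the proof closes by simply comparing definitions.
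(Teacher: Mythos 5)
Your overall strategy --- expand $u$ in the normalised basis, use $Z_A(e^{ij}_{kl})=e^{il}_{kj}$ and Lemma \ref{lemma:rotation} to turn the two unitarity conditions into index equations, then repackage the coefficients as a family of $n\times n$ matrices --- is the paper's strategy. But two things go wrong. First, $uu^*=1$ is not one of the defining conditions of $\{A,R(4,+)\}$-biunitarity, and it is neither assumed nor implied by them; the two conditions are unitarity of $Z_A(u)$ and of $Z_{R(4,+)}(u)$, which in coordinates read
$\sum_{i,k}a^{ij}_{k\ell}\,\overline{a^{iq}_{ks}}=\delta_{\ell s}\delta_{jq}$
and
$\sum_{k}a^{ij}_{k\ell}\,\overline{a^{pj}_{k\ell}}=\tfrac{1}{n}\delta_{ip}$
respectively. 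Your first paragraph derives the orthonormality half of the correspondence from $uu^*=1$, so that part of the argument starts from a hypothesis you do not have.

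Second, and more substantively, the reshaping of $U$ into the family $E_{k\ell}$ with $(E_{k\ell})_{ij}=a^{ij}_{k\ell}$ groups the wrong indices. The $Z_{R(4,+)}$-condition above sums over the single index $k$ with $j,\ell$ held fixed, and the $Z_A$-condition sums over the pair $(i,k)$ with $(j,\ell)$ and $(q,s)$ playing the role of labels; so the family must be indexed by the mixed pair $(j,\ell)$ with matrix entries indexed by $(i,k)$. This is what the paper does, setting $B(j,\ell)=((\sqrt{n}\,a^{ij}_{k\ell}))_{i,k}$, whence $Z_{R(4,+)}(u)$ unitary says each $B(j,\ell)$ is unitary and $Z_A(u)$ unitary says $\{B(j,\ell)\}$ is orthonormal for the normalised trace. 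With your slicing (family labelled by the two lower indices, entries by the two upper ones), neither condition becomes ``each matrix is unitary'' or ``the family is orthonormal,'' and this is not repaired by the index transpositions you leave open or by swapping which condition yields which property. The remaining computation is mechanical once the correct grouping is chosen, but choosing it is essentially the content of the proof.
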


\begin{proof}
Let $\displaystyle{u=\sum_{i,j,k,\ell}~~a^{ij}_{k\ell}~~e^{ij}_{\ell k}~~ } \in P_{(4,+)}$. Then,
\begin{eqnarray*}
Z_{R(4,+)}(u) &=& \sum_{i,j,k,\ell}~~\sqrt{n}~~a^{ij}_{k\ell}~~e[\ell)^i_k(j], ~{\text {and}}\\
Z_A(u) &=& \sum_{i,j,k,\ell}~~a^{ij}_{k\ell}~~e^{ik}_{\ell j}.
\end{eqnarray*}
by Lemma \ref{lemma:rotation} and the observation above. Hence
\begin{equation}\label{auunitary}
(Z_A(u))^*Z_A(u)=1  \Leftrightarrow \sum_{i,k}~~a^{ij}_{k\ell}~~\overline{a^{iq}_{ks}}=\delta_{\ell s}~~\delta_{jq}~~\forall~~\ell, s,j,q=1,\hdots,n
\end{equation}
and
\begin{equation}\label{ruunitary}
Z_{R(4,+)}(u)(Z_{R(4,+)}(u))^*=1 \Leftrightarrow \sum_k~~a^{ij}_{k \ell }~~\overline{a^{pj}_{k \ell }}=\frac{1}{n}~~\delta_{ip}~~\forall j,\ell,i,p=1,\hdots,n
\end{equation} 

%
%
%
%

Let $B(j,\ell)$ be the $n \times n$ matrix given by $B(j,\ell) = ((\sqrt{n}a^{ij}_{k\ell}))$.  Then Equation \ref{ruunitary} is equivalent to $B(j,\ell)$ being a unitary matrix for all $j,\ell$ and
Equation \ref{auunitary} is equivalent to the collection $\{B(j,\ell)\}_{j,\ell}$ forming an orthonormal basis for $M_n({\mathbb C})$.
Thus the association of $u$ with $\{B(j,\ell)\}_{j,\ell}$ is a 1-1 correspondence between $\{A,R(4,+)\}$-biunitary elements of $P_{(4,+)}$ and unitary error bases of $M_n({\mathbb C})$.
%
%
%
%
%
%
%
%
%
\end{proof}

\section{From biunitary elements to subfactor planar algebras}

Throughout this section, $P$ will be a spherical $C^*$-planar algebra with modulus $\delta$ and $u \in P_{(k,\epsilon)}$ will be a  $\{0,\ell\}$-biunitary. 
To this data, we will associate a $C^*$-planar subalgebra $Q$ of the $(\ell,\epsilon)^{th}$ cabling $^{(\ell,\epsilon)} P$ of $P$. The notion of cabling that we use here is a generalised version of the one 
defined in \cite{DeKdy2018}, so we give a careful definition.

\begin{definition}
Let $(\ell,\epsilon) \in {\mathbb N}\times \{\pm\}$. Define $(n,\eta)^{(\ell,\epsilon)}$ to be $(n\ell,\epsilon\eta^\ell)$.
Next, define the $(\ell,\epsilon)$-cable of a tangle $T$, denoted by $T^{(\ell,\epsilon)}$, as follows. Consider the tangle $T$ ignoring its shading and replace each of its strands (including the closed loops) by a cable of $\ell$ parallel strands without changing the $*$-arcs.  Introduce shading in this picture such that an $(n,\eta)$-box of $T$
becomes an $(n,\eta)^{(\ell,\epsilon)}$-box of $T^{(\ell,\epsilon)}$. 
\end{definition}

We omit the verification that this extends uniquely to a chequerboard shading of $T^{(\ell,\epsilon)}$ making it a tangle and that $T \mapsto T^{(\ell,\epsilon)}$ is an `operation on tangles' in the sense of \cite{KdySnd2004}. The corresponding operation on planar algebras will be denoted by $P \mapsto ^{(\ell,\epsilon)}\!\!\!P$. To give an example of cabling, note that the $(\ell,\epsilon)$-cable of the rotation tangle $R(n,\eta)$ of Figure \ref{fig:lrotation} is given as in Figure \ref{fig:cabrot} below. The shading of the
$*$-arc of the external box is given by $\epsilon(-\eta)^\ell$.
\begin{figure}[h]
\centering
\psfrag{k-1}{\tiny $(n-1)\ell$}
\psfrag{1}{\tiny $\ell$}
\includegraphics[height=3.5cm]{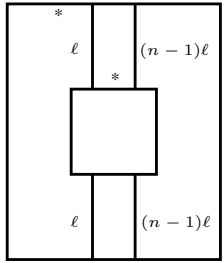}
\caption{The $(\ell,\epsilon)$-cabling of the rotation tangle $R(n,\eta)$}
\label{fig:cabrot}
\end{figure}

\begin{definition} For $(\ell,\epsilon) \in {\mathbb N}\times \{\pm\}$, the planar algebra $^{(\ell,\epsilon)} P$ has underlying vector spaces given by $^{(\ell,\epsilon)} P_{(n,\eta)} = P_{(n,\eta)^{(\ell,\epsilon)}}$ with the tangle action given by 
$Z_T^{^{(\ell,\epsilon)} P} = Z_{T^{(\ell,\epsilon)}}^P$.
\end{definition}

%
%

Before proceeding to define subspaces $Q_{(n,\eta)}$ of $^{(\ell,\epsilon)} P_{(n,\eta)}$, 
we begin with the inevitable notation. For $n=0,1,\hdots$, define the elements $u_{(n,\eta)} \in P_{(n\ell+k-\ell, \epsilon\eta^\ell)}$ as in Figure \ref{fig:definingtangle} - where the label in the last box is $u$ or $u^*$ depending on the parity of $n$.

%
%
%
%

%
%
%
%
%
%

\begin{figure}[h]
\centering
\psfrag{u_{(0,pm)}}{$u_{(0,\pm)}$}
\psfrag{u_{(n,-)}:n>0}{$u_{(n,-)}:n > 0$}
\psfrag{u_{(n,+)}:n>0}{$u_{(n,+)}:n > 0$}
\psfrag{l}{\tiny $\ell$}
\psfrag{k-l}{\tiny $k-\ell$}
\psfrag{u}{\tiny $u$}
\psfrag{u^*}{\tiny ${u^*}$}
\psfrag{u/u^*}{\tiny $u~~\slash~ ~u^*$}
\psfrag{hdots}{$\dots$}
\includegraphics[height=4.5cm]{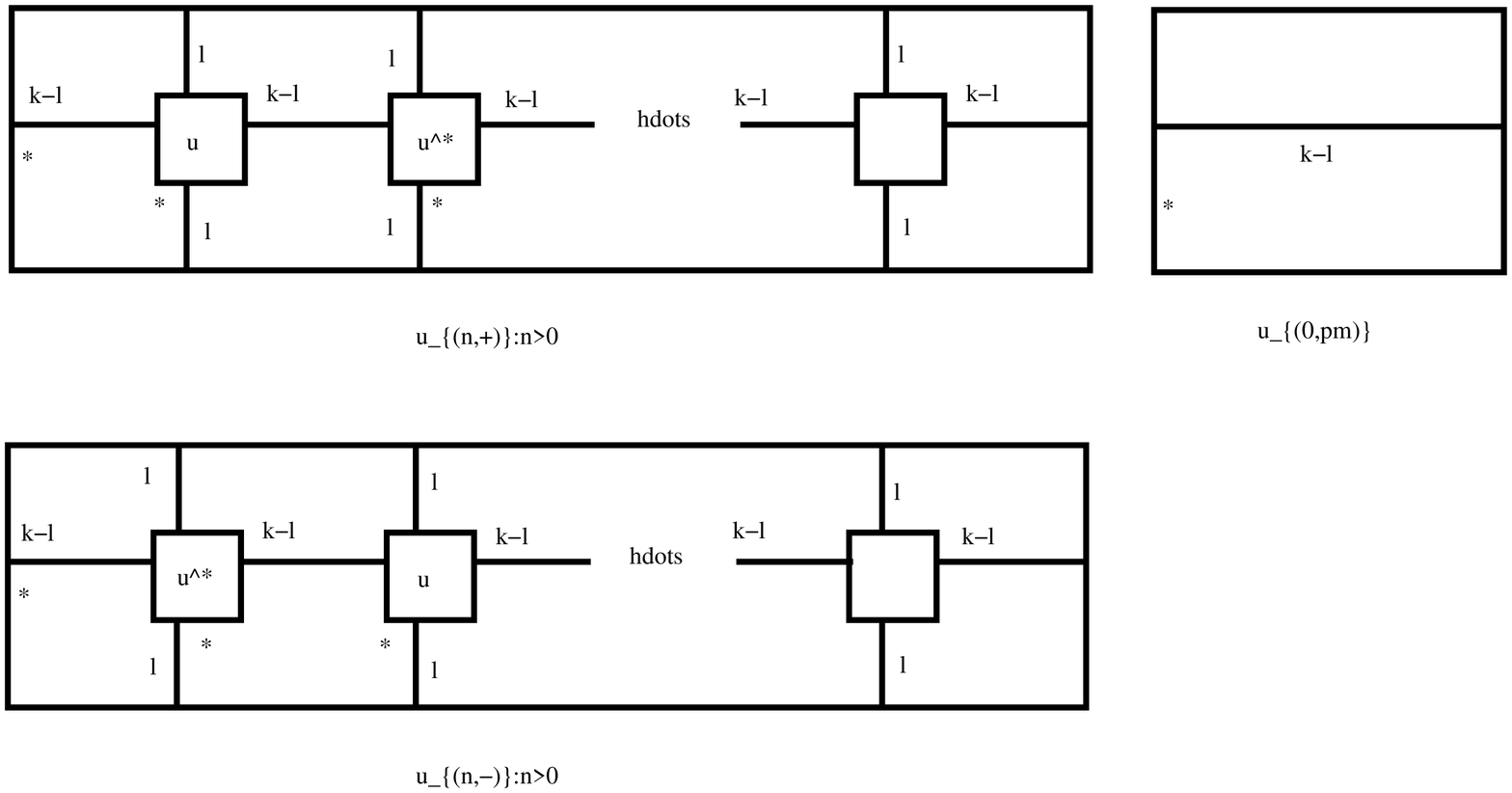}
\caption{The elements $u_{(n,\pm)}$}
\label{fig:definingtangle}
\end{figure}

%

\begin{proposition}\label{xycircles}
For $(n,\eta) \in ({\mathbb N} \cup \{0\}) \times \{\pm\}$ and $x \in P_{(n\ell,\epsilon\eta^\ell)}$, the following three conditions are equivalent.\\
(1) There exists $y \in P_{(n\ell,\epsilon\eta^\ell(-)^{k-\ell})}$ such that the equation in Figure \ref{fig:xyeqn} holds.\\
\begin{figure}[h]
\psfrag{une}{$u_{(n,\eta)}$}
\psfrag{une*}{$u_{(n,\eta)}^*$}
\psfrag{d}{$\delta^{k-\ell}$}
\psfrag{nl}{\tiny $n\ell$}
\psfrag{kml}{\tiny $k-\ell$}
\psfrag{x}{$x$}
\psfrag{y}{$y$}
\includegraphics[height=3cm]{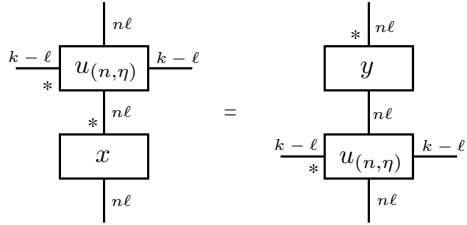}
\caption{Relation between $x \in P_{(n\ell,\epsilon\eta^\ell)}$ and  $y \in P_{(n\ell,\epsilon\eta^\ell(-)^{k-\ell})}$}
\label{fig:xyeqn}
\end{figure}

\noindent
(2) There exists $y \in P_{(n\ell,\epsilon\eta^\ell(-)^{k-\ell})}$ such that the equations in Figure \ref{fig:xyeqn2} hold.\\
\begin{figure}[h]
\psfrag{une}{\small $u_{(n,\eta)}$}
\psfrag{une*}{\small $u_{(n,\eta)}^*$}
\psfrag{x}{$x$}
\psfrag{y}{$y$}
\psfrag{d}{$\delta^{k-\ell}$}
\psfrag{nl}{\tiny $n\ell$}
\psfrag{kml}{\tiny $k-\ell$}
\includegraphics[height=3.2cm]{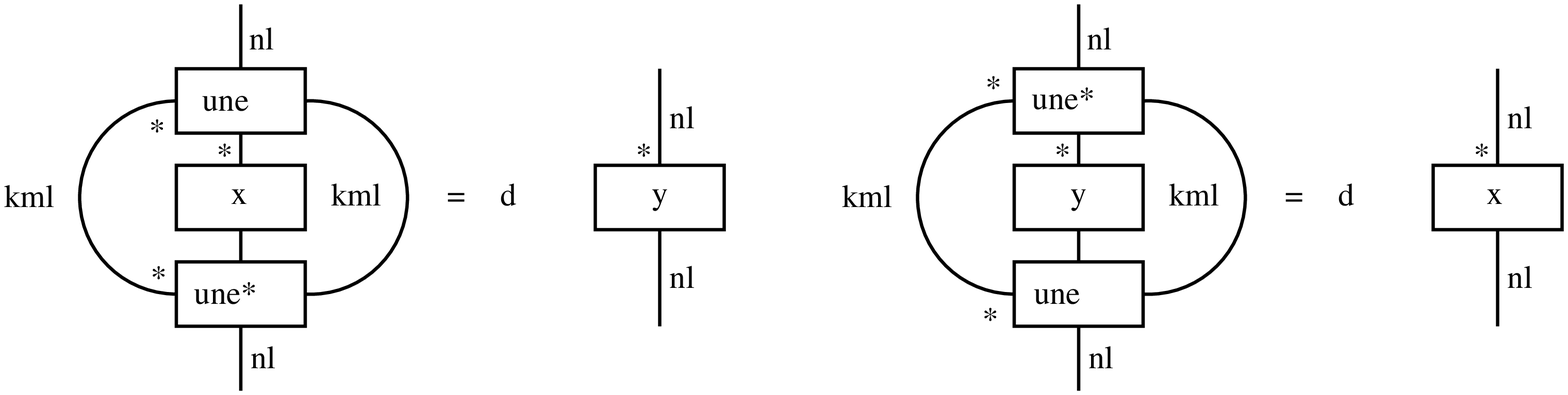}
\caption{Expressions for $x$ and $y$ in terms of each other}
\label{fig:xyeqn2}
\end{figure}

\noindent
(3) The equation in Figure \ref{fig:dcircle} holds.\\
\begin{figure}[h]
\psfrag{une}{\small $u_{(n,\eta)}$}
\psfrag{une*}{\small $u_{(n,\eta)}^*$}
\psfrag{d}{\small $\delta^{-2(k-\ell)}$}
\psfrag{nl}{\tiny $n\ell$}
\psfrag{kml}{\tiny $k-\ell$}
\psfrag{x}{$x$}
\includegraphics[height=5cm]{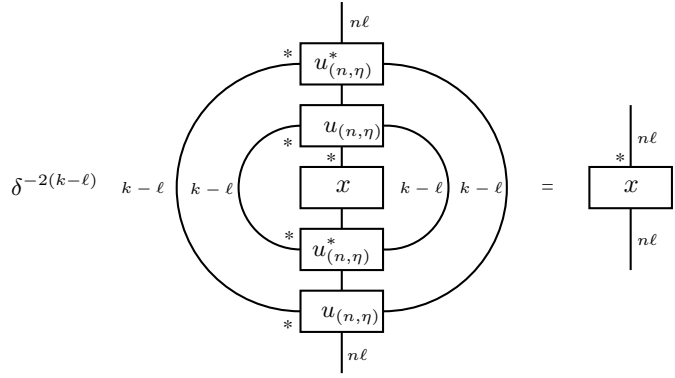}
\caption{The double circle relation}
\label{fig:dcircle}
\end{figure}

\end{proposition}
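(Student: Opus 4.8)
The plan is to prove the cyclic chain of implications $(1)\Rightarrow(2)\Rightarrow(3)\Rightarrow(1)$; once all three are shown, equivalence follows, and along the way $(3)\Rightarrow(2)$ also drops out. Every step is a diagrammatic manipulation whose only inputs are the $\{0,\ell\}$-biunitarity of $u$ — used in the form of the local relations of Figure \ref{fig:unitary} guaranteed by Lemma \ref{lemma:eqcond} — and the sphericality of $P$. The one observation that gets used over and over is this: stacking $u_{(n,\eta)}$ on $u_{(n,\eta)}^*$ along the appropriate $n\ell$ strands lets the $n$ internal $u$-boxes of Figure \ref{fig:definingtangle} cancel, one by one down the zig-zag, against the $n$ internal $u^*$-boxes via $uu^*=u^*u=1$, collapsing the entire picture to a power of $\delta$ (produced by the closed loops so created) times an identity-type tangle on the leftover $k-\ell$ strands; the zig-zag geometry forces that power to be exactly $\delta^{k-\ell}$, which is why that factor appears in Figures \ref{fig:xyeqn} and \ref{fig:xyeqn2}.

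For $(1)\Rightarrow(2)$ I would cap the single equation of Figure \ref{fig:xyeqn} in the two natural ways. Attaching $u_{(n,\eta)}^*$ along the $n\ell$ strands carrying $x$ and applying the cancellation above turns the left-hand side into $\delta^{k-\ell}x$ (times the identity tangle) and the right-hand side into a closed expression for $x$ in terms of $y$ — the first equation of Figure \ref{fig:xyeqn2}. Attaching instead $u_{(n,\eta)}$ along the strands carrying $y$, and performing the analogous cancellation there — which is now governed by the \emph{lower} relations of Figure \ref{fig:unitary}, i.e.\ the unitarity of $Z_{R(k,\epsilon,\ell)}(u)$, this being precisely the point at which one needs \emph{bi}unitarity rather than bare unitarity, and also the reason $y$ lives in the parity-shifted space $P_{(n\ell,\,\epsilon\eta^\ell(-)^{k-\ell})}$ — yields the second equation of Figure \ref{fig:xyeqn2}.

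For $(2)\Rightarrow(3)$, substitute the expression for $y$ in terms of $x$ into the expression for $x$ in terms of $y$: the variable $y$ is eliminated and $x$ is left surrounded by two nested circles built from $u_{(n,\eta)}$ and $u_{(n,\eta)}^*$, the two factors $\delta^{-(k-\ell)}$ from the two substituted formulas combining into the single $\delta^{-2(k-\ell)}$ of Figure \ref{fig:dcircle}; a spherical isotopy sliding the outer circle around then puts the picture in the stated form, which is the double circle relation. For $(3)\Rightarrow(1)$, \emph{define} $y\in P_{(n\ell,\,\epsilon\eta^\ell(-)^{k-\ell})}$ by the formula expressing $y$ in terms of $x$ from Figure \ref{fig:xyeqn2}; the double circle relation $(3)$ then says exactly that the other formula — $x$ in terms of this $y$ — also holds, so both equations of Figure \ref{fig:xyeqn2} hold, and attaching $u_{(n,\eta)}$ (resp.\ $u_{(n,\eta)}^*$) to one of them and collapsing the doubled cable with $uu^*=u^*u=1$ recovers Figure \ref{fig:xyeqn}, the surviving loops contributing precisely $\delta^{k-\ell}$.

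The hard part is not conceptual but bookkeeping. One must check at every stage that the chequerboard shadings and the parity $(-)^{k-\ell}$ match up — this is where the conventions of Figure \ref{fig:lrotation} and the shading behaviour of cabling are used — and one must carefully count the closed loops produced whenever cabled copies of $u$ meet cabled copies of $u^*$ in the zig-zag of Figure \ref{fig:definingtangle}, so that the exponents of $\delta$ come out as claimed ($k-\ell$ in $(1)$ and $(2)$, $2(k-\ell)$ in $(3)$). Sphericality is exactly what makes these loop counts, and hence the $\delta$-powers, isotopy invariant, so that the statements of Figures \ref{fig:xyeqn}, \ref{fig:xyeqn2} and \ref{fig:dcircle} are unambiguous to begin with.
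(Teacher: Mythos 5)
Your implications $(1)\Rightarrow(2)$ and $(2)\Rightarrow(3)$ are fine and are exactly what the paper does (both are quick consequences of the relations in Figure \ref{fig:unitary}). The genuine gap is in your $(3)\Rightarrow(1)$. Defining $y$ by the ``$y$ in terms of $x$'' formula of Figure \ref{fig:xyeqn2} and reading the double circle relation as the other formula does give you $(3)\Rightarrow(2)$; but your final step --- ``attaching $u_{(n,\eta)}$ (resp.\ $u_{(n,\eta)}^*$) to one of them and collapsing the doubled cable with $uu^*=u^*u=1$ recovers Figure \ref{fig:xyeqn}'' --- does not go through. The unitarity relations of Figure \ref{fig:unitary} cancel a $u$ against a $u^*$ only when \emph{all} of their strands are plugged into each other; in the diagram you obtain, the $u_{(n,\eta)}$ and $u_{(n,\eta)}^*$ circles are separated by a partial closure on the $k-\ell$ strands, which is precisely why the operator $E=\sigma\sigma^*$ of the paper (left of Figure \ref{fig:eandf}) is a nontrivial projection rather than the identity. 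What your manipulation actually proves is an identity of the form $\sigma(x)=EF\sigma(x)$, whereas condition $(1)$ is the stronger statement $F\sigma(x)=\sigma(x)$; algebraically and diagrammatically the former does not imply the latter.

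The missing ingredient is operator-theoretic, not pictorial: the paper introduces the isometry $\sigma$ (Lemma \ref{lemma:sigmadef}) so that $E=\sigma\sigma^*$ and $F$ become \emph{orthogonal} projections on the Hilbert space $P_{(n\ell+k-\ell,\epsilon\eta^\ell)}$, and then from $EF\sigma(x)=\sigma(x)$ deduces $\|\sigma(x)\|=\|EF\sigma(x)\|\leq\|F\sigma(x)\|\leq\|\sigma(x)\|$, forcing $F\sigma(x)=\sigma(x)$. This is where the hypothesis that $P$ is a $C^*$-planar algebra (faithful positive traces, hence honest Hilbert-space norms) is indispensable. Your framing attributes the rigidity to sphericality making loop counts well defined; sphericality is a side issue here, and positivity is the load-bearing assumption --- Remark \ref{exception} of the paper makes exactly this point, noting that without positivity the equivalence (and hence this definition of $Q$) breaks down and one must argue differently. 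To repair your proof you would need to add the isometry/projection argument (or an equivalent substitute) to bridge $(2)$, or equivalently $\sigma(x)=EF\sigma(x)$, to $(1)$.
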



The proof of Proposition \ref{xycircles} that we give here is an adaptation of the one in \cite{Jns1999}
with a few more details included. 
We pave the way for the proof by defining and proving some properties of a map
from $P_{(n\ell,\epsilon\eta^\ell)}$ to $P_{(n\ell+k-\ell,\epsilon\eta^\ell)}$. Recall - see Definition 12 of \cite{KdyMrlSniSnd2019} - that a $*$-planar algebra $P$ is said to be a $C^*$-planar algebra if there exist positive normalised traces
$\tau_{\pm}: P_{(0,\pm)} \rightarrow {\mathbb C}$ such that all the traces $\tau_{\pm} \circ Z_{TR^{(0,\pm)}}$ defined on $P_{(k,\pm)}$ are faithful and positive. Thus all the spaces $P_{(k,\pm)}$  equipped with the
trace inner product are 
Hilbert spaces.

\begin{lemma}\label{lemma:sigmadef}
The map $\sigma: P_{(n\ell,\epsilon\eta^\ell)} \rightarrow P_{(n\ell+k-\ell,\epsilon\eta^\ell)}$ defined as in
the left of Figure \ref{fig:sigmadef} is an isometry with adjoint $\sigma^*$ given as in the right of Figure \ref{fig:sigmadef}.
\begin{figure}[h]
\psfrag{une}{\small $u_{(n,\eta)}$}
\psfrag{une*}{\small $u_{(n,\eta)}^*$}
\psfrag{d}{\small $\delta^{-(k-\ell)}$}
\psfrag{nl}{\tiny $n\ell$}
\psfrag{kml}{\tiny $k-\ell$}
\psfrag{x}{$x$}
\psfrag{z}{$z$}
\psfrag{u}{\small $u_{(n,\eta)}$}
\psfrag{u*}{\small $u_{(n,\eta)}^*$}
\includegraphics[height=5cm]{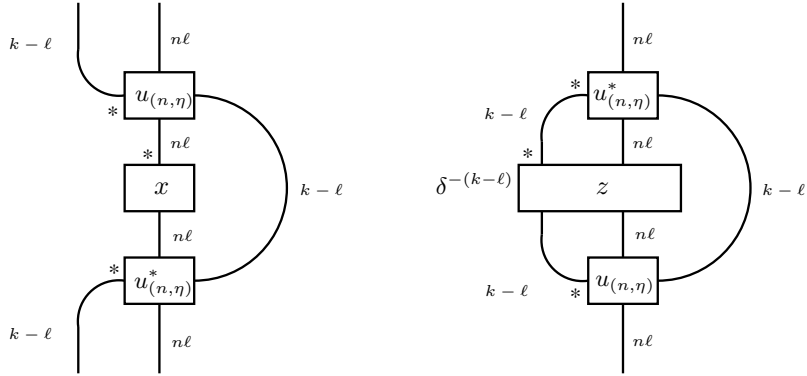}
\caption{$\sigma(x)$ and $\sigma^*(z)$ for $x \in P_{(n\ell,\epsilon\eta^\ell)}$ and $z \in P_{(n\ell+k-\ell,\epsilon\eta^\ell)}$}
\label{fig:sigmadef}
\end{figure}
Further, both $\sigma$ and $\sigma^*$ are equivariant for the $*$-operations on $P_{(n\ell,\epsilon\eta^\ell)}$ and $P_{(n\ell+k-\ell,\epsilon\eta^\ell)}$.
\end{lemma}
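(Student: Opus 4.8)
The plan is to verify the three asserted properties of $\sigma$ --- that it is an isometry, that $\sigma^*$ has the stated form, and that both maps are $*$-equivariant --- by pictorial manipulations using the $\{0,\ell\}$-biunitarity of $u$, i.e. the relations of Figure \ref{fig:unitary}, together with the defining pictures of the elements $u_{(n,\eta)}$ in Figure \ref{fig:definingtangle}. First I would observe that $u_{(n,\eta)}^* u_{(n,\eta)}$ and $u_{(n,\eta)} u_{(n,\eta)}^*$ can be reduced by repeatedly applying the unitarity of $u$ (and, after an appropriate rotation, of $Z_{R(k,\epsilon,\ell)}(u)$) inside the nested picture defining $u_{(n,\eta)}$: each pair of adjacent $u$, $u^*$ boxes cancels to a system of through-strands times a factor of $\delta$, and peeling these off one at a time collapses the composite down to a multiple of a ``cup--cap'' tangle on $n\ell$ strands. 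Keeping track of the powers of $\delta$ that accumulate from the $k-\ell$ cancelled strands in each of the, roughly, $n$ layers gives the normalising constants $\delta^{-(k-\ell)}$ appearing in $\sigma$ and $\delta^{-2(k-\ell)}$ in the double-circle relation.

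Next I would compute $\langle \sigma(x), \sigma(y)\rangle$ for $x, y \in P_{(n\ell, \epsilon\eta^\ell)}$ directly from the definition: this is a closed diagram obtained by stacking the picture for $\sigma(x)$ on top of (the adjoint of) the picture for $\sigma(y)$, which contains $u_{(n,\eta)}^*$ directly above $u_{(n,\eta)}$. Applying the reduction from the previous step --- i.e. the identity $u_{(n,\eta)}^* u_{(n,\eta)} = \delta^{\,?}\cdot(\text{through-strands})$ with the exponent matching $\delta^{-(k-\ell)}$-squared from the two copies --- the $u$-boxes disappear and the diagram collapses to exactly the closed diagram computing $\langle x, y\rangle$. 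Hence $\sigma$ is an isometry. To identify $\sigma^*$, I would take the formula in the right-hand side of Figure \ref{fig:sigmadef} as a candidate map $\sigma^\dagger$ and check $\langle \sigma(x), z\rangle = \langle x, \sigma^\dagger(z)\rangle$ for all $x$ and all $z \in P_{(n\ell+k-\ell,\epsilon\eta^\ell)}$; since $\sigma(x)$ is ``$x$ with $u_{(n,\eta)}$-caps attached'', this inner product is a closed diagram in which $z$ sits with $u_{(n,\eta)}$ above and $u_{(n,\eta)}^*$ below it near the cap region, which is manifestly the diagram for $\langle x, \sigma^\dagger(z)\rangle$ after the same local cancellations --- confirming $\sigma^* = \sigma^\dagger$. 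Finally, $*$-equivariance of $\sigma$ and $\sigma^*$ follows because the $*$-operation in a $*$-planar algebra acts on a tangle by reflecting the picture and conjugating the labels, and the defining pictures of $\sigma$, $\sigma^*$ are symmetric under this reflection provided one also swaps $u_{(n,\eta)} \leftrightarrow u_{(n,\eta)}^*$, which is precisely how $*$ acts on the labels; I would spell this out by noting $\sigma(x)^* $ is computed from the reflected picture with $x^*$ in place of $x$ and $u_{(n,\eta)}^*$ in place of $u_{(n,\eta)}$, which is exactly the picture for $\sigma(x^*)$.

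The main obstacle I anticipate is the bookkeeping in the first step: carefully justifying that the nested cancellations inside the $n$-layer picture of $u_{(n,\eta)}$ really do produce \emph{precisely} $\delta^{k-\ell}$ (and $\delta^{2(k-\ell)}$ for the doubled version) and not some other power, and that the residual tangle is exactly the identity-with-caps and not a partially rotated version of it. This requires being attentive to the alternating shading/parity of the layers (the $u$ vs.\ $u^*$ alternation governed by the parity of $n$) and to how the $\ell$ ``live'' strands thread past the $k-\ell$ ``cancelled'' strands at each stage; the conventions set up around Figures \ref{fig:lrotation} and \ref{fig:definingtangle} (in particular the rule that shading is determined by the sub/superscripts) should make this unambiguous, but it is the step where a sign or exponent is most easily lost. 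Everything after that --- the isometry computation, the identification of the adjoint, and $*$-equivariance --- is then a routine consequence of that single reduction lemma applied in the two relevant configurations.
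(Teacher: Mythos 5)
Your proposal is correct and follows essentially the same route as the paper: $*$-equivariance is read off from the pictures, the adjoint formula is verified via the trace inner product, and the isometry property $\sigma^*\sigma = \mathrm{id}$ is a pictorial cancellation using the biunitarity relations of Figure \ref{fig:unitary}. One simplification the paper uses that you could adopt: the identity $\langle\sigma(x),z\rangle = \langle x,\sigma^*(z)\rangle$ is pure isotopy of closed diagrams (sliding the single $u_{(n,\eta)}$-box around the trace annulus), so no biunitarity cancellations are needed there at all --- the cancellations, and hence all the $\delta$-exponent bookkeeping you flag as the main obstacle, are confined to the single verification that $\sigma^*\sigma = \mathrm{id}$.
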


\begin{proof} Consider the maps $\sigma$ and $\sigma^*$ defined by the left and right side
pictures in Figure \ref{fig:sigmadef} respectively. It is clear that they are equivariant for the $*$-operations on $P_{(n\ell,\epsilon\eta^\ell)}$ and $P_{(n\ell+k-\ell,\epsilon\eta^\ell)}$. To show that they  are actually adjoints of each other, it suffices to
verify the equality of Figure \ref{fig:equality} for arbitrary $x \in P_{(n\ell,\epsilon\eta^\ell)}$ and $z \in P_{(n\ell+k-\ell,\epsilon\eta^\ell)}$.
\begin{figure}[h]
\psfrag{une}{\small $u_{(n,\eta)}$}
\psfrag{une*}{\small $u_{(n,\eta)}^*$}
\psfrag{d}{\tiny $=$}
\psfrag{nl}{\tiny $n\ell$}
\psfrag{kml}{\tiny $k-\ell$}
\psfrag{x}{$x$}
\psfrag{z}{$z^*$}
\psfrag{u}{\small $u_{(n,\eta)}$}
\psfrag{u*}{\small $u_{(n,\eta)}^*$}
\includegraphics[height=8cm]{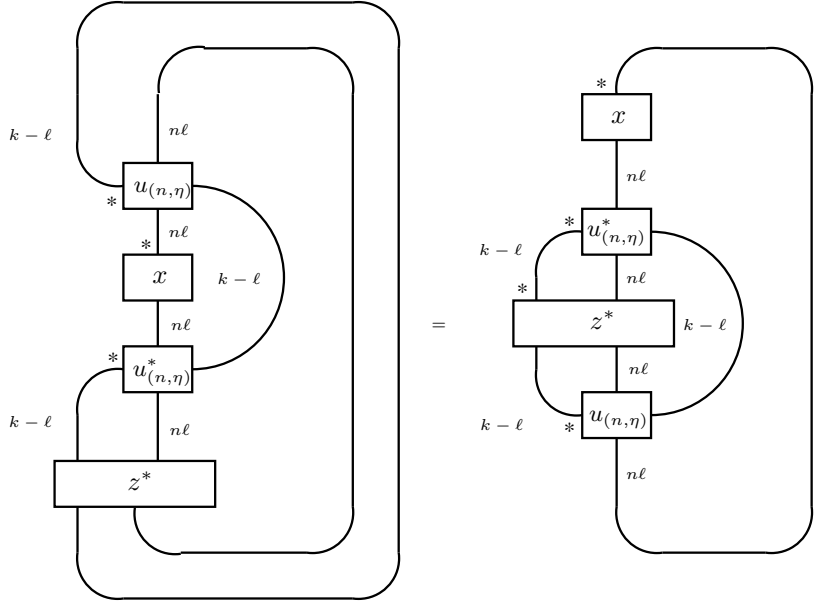}
\caption{Equality to be verified}
\label{fig:equality}
\end{figure}
This is clear by isotopy. Finally, that $\sigma^*\sigma = id$ is a simple pictorial verification using the
equalities of Figure \ref{fig:unitary}.
\end{proof}

From Lemma \ref{lemma:sigmadef} it follows that $E = \sigma\sigma^*$ is a projection onto $ran(\sigma) \subseteq 
P_{(n\ell+k-\ell,\epsilon\eta^\ell)}$ that is equivariant for the $*$-operations. Pictorially $E$ is given by the picture on the left in Figure \ref{fig:eandf}.
We will also the need the picture on the right in Figure \ref{fig:eandf} which is the projection onto the subspace $P_{(k-\ell,n\ell+k-\ell,\epsilon\eta^\ell)}$ of $P_{(n\ell+k-\ell,\epsilon\eta^\ell)}$, which also is $*$-equivariant.
\begin{figure}[h]
\psfrag{une}{\small $u_{(n,\eta)}$}
\psfrag{une*}{\small $u_{(n,\eta)}^*$}
\psfrag{d}{\small $\delta^{-(k-\ell)}$}
\psfrag{nl}{\tiny $n\ell$}
\psfrag{kml}{\tiny $k-\ell$}
\psfrag{x}{$x$}
\psfrag{z}{}
\psfrag{u}{\small $u_{(n,\eta)}$}
\psfrag{u*}{\small $u_{(n,\eta)}^*$}
\includegraphics[height=8cm]{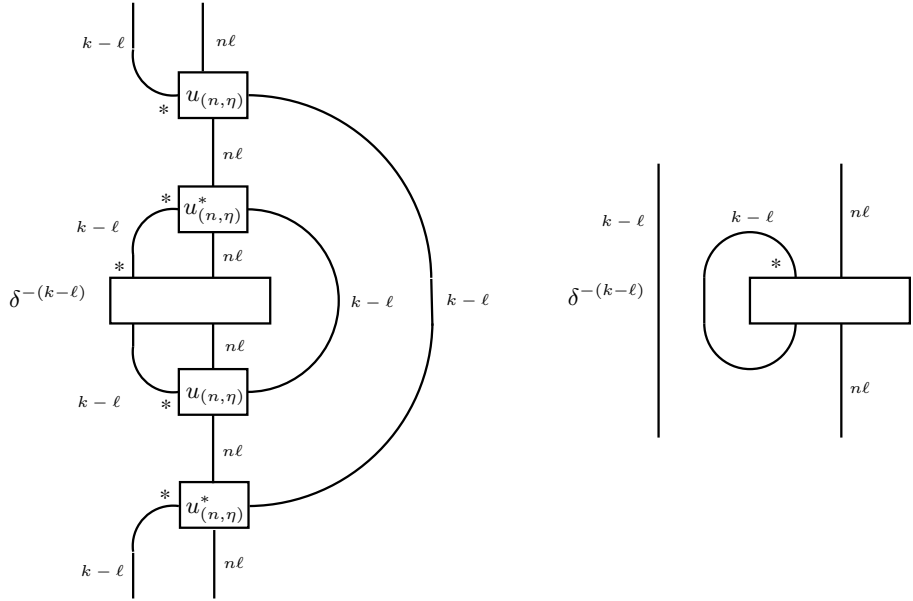}
\caption{The projections $E$ and $F$}
\label{fig:eandf}
\end{figure}

\begin{proof}[Proof of Proposition \ref{xycircles}] (1) $\Rightarrow$ (2) Using the relations 
in Figure \ref{fig:unitary}, it  is easy to see that the equation in Figure \ref{fig:xyeqn} implies those of Figure \ref{fig:xyeqn2}.\\
(2) $\Rightarrow$ (3) This is clear.\\
(3) $\Rightarrow$ (1) We need to see
that the double circle relation of Figure \ref{fig:dcircle} implies the existence of a $y$ satisfying the
equation in Figure \ref{fig:xyeqn}.

Observe that the picture on the left in Figure \ref{fig:dcircle} is given by $\sigma^*F\sigma(x)$. Thus the
double circle relation implies that $\sigma^*F\sigma(x) = x$ and hence (applying $\sigma$ on both sides and using the definition of $E$) that $EF\sigma(x) = \sigma(x)$. Since 
$E$ and $F$ are projections, norm considerations imply that 
$||EF\sigma(x)|| \leq ||F\sigma(x)|| \leq ||\sigma(x)||$. Therefore equality holds throughout and so
$F\sigma(x) = \sigma(x)$. Now define $y$ by the first equality in Figure \ref{fig:xyeqn2}. The equation $F\sigma(x) = \sigma(x)$ then implies that the
equation on the left of Figure \ref{fig:xyeqn3} holds and therefore also the equation on the right.

\begin{figure}[h]
\psfrag{une}{\small $u_{(n,\eta)}$}
\psfrag{une*}{\small $u_{(n,\eta)}^*$}
\psfrag{x}{$x$}
\psfrag{y}{$y$}
\psfrag{d}{$\delta^{k-\ell}$}
\psfrag{nl}{\tiny $n\ell$}
\psfrag{kml}{\tiny $k-\ell$}
\psfrag{u}{\small $u_{(n,\eta)}$}
\psfrag{u*}{\small $u_{(n,\eta)}^*$}
\includegraphics[height=5.8cm]{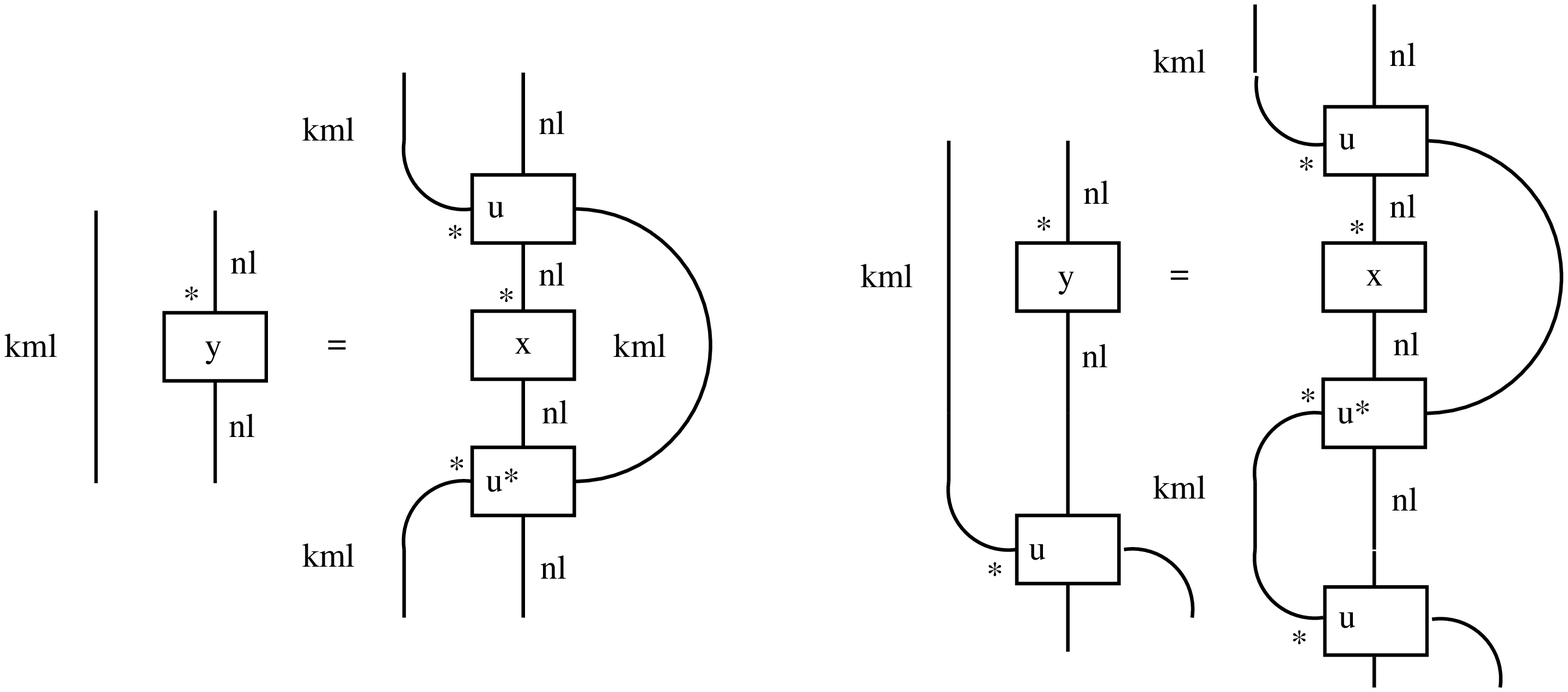}
\caption{Expressions for $x$ and $y$ in terms of each other}
\label{fig:xyeqn3}
\end{figure}

Finally, using the relations of Figure \ref{fig:unitary}, this  completes the proof.
\end{proof}

We now define subspaces $Q_{(n,\eta)}$ of $^{(\ell,\epsilon)} P_{(n,\eta)}$ by 
$Q_{(n,\eta)} = \{ x \in\,\!^{(\ell,\epsilon)}\! P_{(n,\eta)} = P_{(n\ell,\epsilon\eta^\ell)} :
\lefteqn{ \text{The equivalent conditions (1)-(3) of Proposition \ref{xycircles} hold for~} x\}}$

The main result of this section is the following theorem.

\begin{theorem}{\label{deftheorem}}
The subspaces $Q_{(n,\eta)}$ 
yield a $C^*$-planar subalgebra $Q$ of $^{(\ell,\epsilon)}P$.
\end{theorem}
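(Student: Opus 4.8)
The plan is to verify the three defining axioms of a planar subalgebra for the collection $Q = \{Q_{(n,\eta)}\}$: that each $Q_{(n,\eta)}$ is a linear subspace containing the units, that $Q$ is closed under the $*$-operation, and — the crux — that $Q$ is closed under the action of all tangles, i.e. $Z_{T^{(\ell,\epsilon)}}$ maps a tuple of elements of the appropriate $Q_{(\cdot,\cdot)}$ into the appropriate $Q_{(\cdot,\cdot)}$. Once these are in place, the $C^*$-structure is inherited: $Q_{(n,\eta)}$ is a subspace of the Hilbert space $^{(\ell,\epsilon)}P_{(n,\eta)}$, hence the trace inner products restrict, and positivity/faithfulness of the relevant traces is automatic on a subspace; finite-dimensionality is also inherited from Theorem \ref{spinthm} applied to $P$ under cabling. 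So the real content is the three structural properties, with closure under tangles being the main obstacle.

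For linearity, I would use characterisation (3), the double circle relation of Figure \ref{fig:dcircle}: it is a linear equation in $x$ (the maps $\sigma$, $\sigma^*$, $F$ from Lemma \ref{lemma:sigmadef} are linear), so its solution set $Q_{(n,\eta)}$ is a linear subspace. For the units: one checks that the unit of $^{(\ell,\epsilon)}P_{(n,\eta)}$ satisfies the double circle relation — this is a direct pictorial computation using the biunitarity relations of Figure \ref{fig:unitary} to cancel the $u_{(n,\eta)}$ and $u_{(n,\eta)}^*$ circles, leaving the power of $\delta$ to match. For $*$-closure, I would use Lemma \ref{lemma:sigmadef}: $\sigma$, $\sigma^*$ and $F$ are $*$-equivariant, so if $\sigma^*F\sigma(x) = x$ then applying $*$ and equivariance gives $\sigma^*F\sigma(x^*) = x^*$; here one must also check that the $*$-operation on $^{(\ell,\epsilon)}P_{(n,\eta)}$, which is inherited from $P_{(n\ell,\epsilon\eta^\ell)}$, interacts correctly with the subscript bookkeeping, but this is routine.

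The hard part is closure under tangles. The strategy is to reduce to generators of the planar operad: it suffices to show $Q$ is closed under a generating set of tangles — multiplication, inclusion, conditional expectation (the partial trace), the rotation, and the ``capping'' tangles — since every tangle is a composite of these. For each generator $T$, I would take elements $x, x', \dots$ of the relevant $Q$'s (so each satisfies the characterisation via its companion $y, y', \dots$ as in Figure \ref{fig:xyeqn}, using characterisation (1)/(2)) and exhibit a companion element for $Z_{T^{(\ell,\epsilon)}}(x, x', \dots)$ by an explicit pictorial manipulation: push the $u_{(n,\eta)}$-circles around using Figure \ref{fig:xyeqn2}, apply the biunitarity relations of Figure \ref{fig:unitary}, and re-collect. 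This is exactly the classical argument of Jones for biunitary matrices and commuting squares, adapted to the cabled planar-algebra setting; the mild novelty is that $u$ sits in $P_{(k,\epsilon)}$ rather than a $2$-box, so the circles $u_{(n,\eta)}$ carry $k - \ell$ strands on the outside and the isotopies must respect the $(\ell,\epsilon)$-cabling shading — Figure \ref{fig:cabrot} and the cabling definition handle this. I expect the rotation and conditional-expectation cases to need the most care, since there one genuinely uses that $x$ admits a companion $y$ on the ``other side'' (the element $Z_{R(k,\epsilon,\ell)}(u)$ being unitary), rather than just $u$ being unitary; the equivalence of conditions (1)–(3) in Proposition \ref{xycircles} is precisely what makes these manipulations go through symmetrically.
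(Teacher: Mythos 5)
Your proposal is correct and follows essentially the same route as the paper: closure under $*$ via the double circle relation, and closure under tangles by reducing to the generating set consisting of the unit, multiplication, inclusion, conditional expectation and rotation tangles, exhibiting a companion $y$ for the image in each case, with the conditional expectation and rotation cases requiring the extra biunitarity manipulations you anticipate. The only cosmetic difference is that the paper treats the $C^*$-structure as immediate once $*$-closure is established, exactly as you note.
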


\begin{proof}
In order to prove that $Q$ is a $C^*$-planar subalgebra of $^{(\ell,\epsilon)}P$ it is enough to prove 
that it is a planar subalgebra of $^{(\ell,\epsilon)}P$ and that it is closed under $*$. Closure under $*$
is clear from the double circle condition of Figure \ref{fig:dcircle}.

To verify that $Q$ is a planar subalgebra of $^{(\ell,\epsilon)}P$, it suffices to see that it is closed under
the action of any set of `generating tangles'. A set of such generating tangles, albeit for the class of  `restricted tangles' - see \cite{DeKdy2018} - was given
in Theorem 3.5 of \cite{KdySnd2004}. It follows easily from that result that a set of generating tangles for all tangles is given by
$\{1^{(0,\pm)}\} \cup \{M_{(n,\eta),(n,\eta)}^{(n,\eta)}, I_{(n,\eta)}^{(n+1,\eta)}, E_{(n+1,\eta)}^{(n,\eta)}: n \geq 0\} \cup \{ R^{(n,-\eta)}_{(n,\eta)}  : n \geq 1\}$.
Here $E$, $M$ and $I$ are the tangles in Figure \ref{fig:commontangles} below.
We will show, case by case, that $Q$ is closed under the action of each of these
tangles.

\begin{figure}[h]
\centering
\psfrag{n}{\tiny $n$}
\psfrag{*}{\tiny $*$}
\psfrag{E}{\small $E_{(n+1,\eta)}^{(n,\eta)}$}
\psfrag{M}{\small $M_{(n,\eta),(n,\eta)}^{(n,\eta)}$}
\psfrag{I}{\small $I_{(n,\eta)}^{(n+1,\eta)}$}
\includegraphics[height=2.5cm]{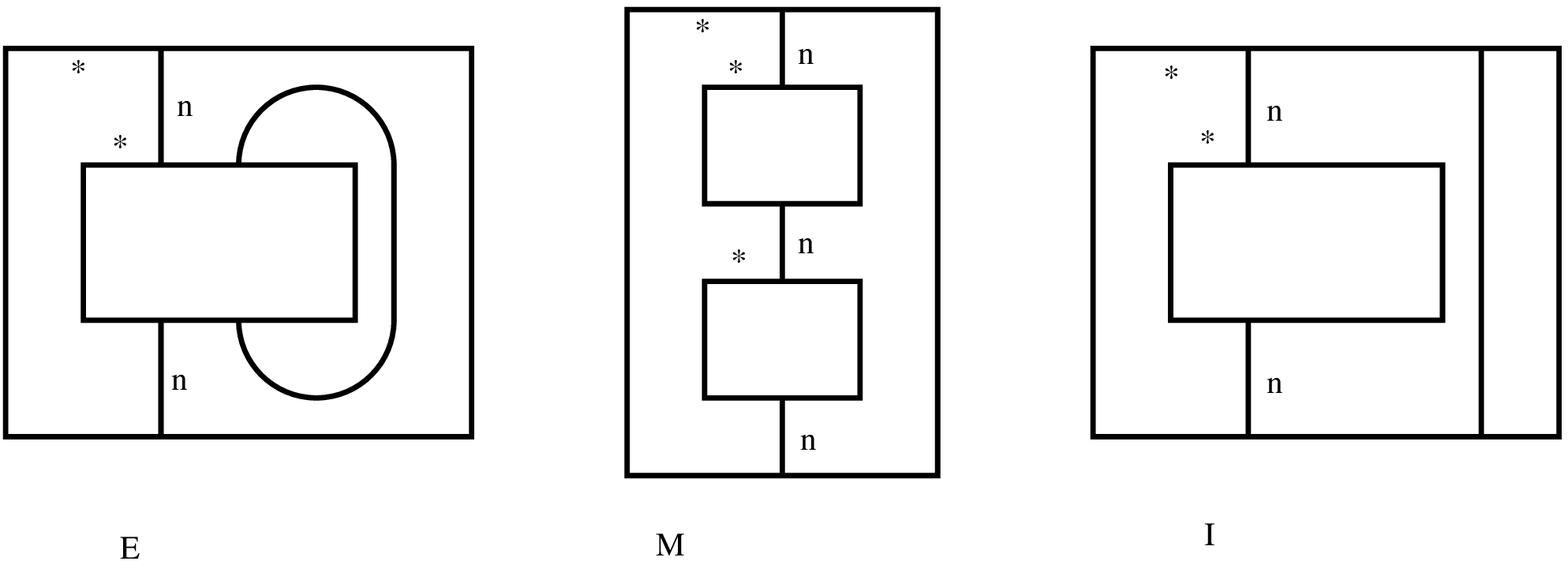}
\caption{Conditional expectation, multiplication and inclusion tangles}
\label{fig:commontangles}
\end{figure}

Closure under $1^{(0,\pm)}$: We need to check that $Z_{1^{(0,\pm)}}^{(^{(\ell,\epsilon)}P)}(1) = 1_{(0,\epsilon(\pm)^\ell)} \in Q_{(0,\pm)}$. This follows directly from the double circle relation of Figure \ref{fig:dcircle}. 

Closure under $M = M_{(n,\eta),(n,\eta)}^{(n,\eta)}$: We need to check that if $x_1,x_2 \in Q_{(n,\eta)}$, then $Z_M^{(^{(\ell,\epsilon)}P)}(x_1 \otimes x_2) = Z_{M^{(\ell,\epsilon)}}^{P}(x_1 \otimes x_2) \in Q_{(n,\eta)}$. Observe that $M^{(\ell,\epsilon)}$ is the multiplication tangle of colour $(n\ell,\epsilon\eta^\ell)$. Now suppose that
$y_1,y_2 \in P_{(n\ell,\epsilon\eta^\ell(-)^{k-\ell})}$ are such that the equation in Figure \ref{fig:xyeqn} holds for $x_1,y_1$ and for $x_2,y_2$. It is easy to see that then the 
same equation also holds for $x_1x_2,y_1y_2$.

Closure under $I = I_{(n,\eta)}^{(n+1,\eta)}$: We need to see that if $x \in Q_{(n,\eta)}$, then $Z_I^{(^{(\ell,\epsilon)}P)}(x)  = Z^P_{I^{(\ell,\epsilon)}} \in Q_{(n+1,\eta)}$. 
Observe that the tangle $I^{(\ell,\epsilon)}$ is the $\ell$-fold iterated inclusion tangle
from $P_{(n\ell,\epsilon\eta^\ell)}$ to $P_{((n+1)\ell,\epsilon\eta^\ell)}$.
Suppose that
$y \in P_{(n\ell,\epsilon\eta^\ell(-)^{k-\ell})}$ is such that the equation in Figure \ref{fig:xyeqn} holds for $x,y$. Again, an easy verification shows that the equation in Figure \ref{fig:xyeqn} also holds for $Z_{I^{(\ell,\epsilon)}}(x),Z_{\tilde{I}^{(\ell,\epsilon)}}(y)$, where $\tilde{I}^{(\ell,\epsilon)}$ is the $\ell$-fold iterated inclusion tangle
from $P_{(n\ell,\epsilon\eta^\ell(-)^{k-\ell})}$ to $P_{((n+1)\ell,\epsilon\eta^\ell(-)^{k-\ell}))}$.

Closure under $E = E_{(n+1,\eta)}^{(n,\eta)}$: We need to see that if $x \in Q_{(n+1,\eta)}$, then $Z_E^{(^{(\ell,\epsilon)}P)}(x)$ $  = Z^P_{E^{(\ell,\epsilon)}}(x) \in Q_{(n,\eta)}$. Observe that the tangle $E^{(\ell,\epsilon)}$ is the $\ell$-fold iterated conditional expectation tangle
from $P_{((n+1)\ell,\epsilon\eta^\ell)}$ to $P_{(n\ell,\epsilon\eta^\ell)}$.
Take
$y \in P_{((n+1)\ell,\epsilon\eta^\ell(-)^{k-\ell})}$ such that the equation in Figure \ref{fig:xyeqn} holds for $x,y$.
We will verify that then, the equation in Figure \ref{fig:xyeqn} also holds for $Z_{E^{(\ell,\epsilon)}}(x),Z_{\tilde{E}^{(\ell,\epsilon)}}(y)$, where $\tilde{E}^{(\ell,\epsilon)}$ is the $\ell$-fold iterated conditional expectation tangle
from $P_{((n+1)\ell,\epsilon\eta^\ell(-)^{k-\ell})}$ to $P_{(n\ell,\epsilon\eta^\ell(-)^{k-\ell}))}$.

First note that, it is an easy consequence of the relations in Figure \ref{fig:unitary} that
the relations of Figure \ref{fig:morerelations} hold for all $(n,\eta)$.
\begin{figure}[h]
\centering
\psfrag{nl}{\tiny $n\ell$}
\psfrag{*}{\tiny $*$}
\psfrag{l}{\tiny $\ell$}
\psfrag{u}{\small $u_{(n,\eta)}$}
\psfrag{u'}{\small $u_{(n+2,\eta)}$}
\psfrag{k-l}{\tiny $k-\ell$}
\psfrag{=}{\small $=$}
\includegraphics[height=5cm]{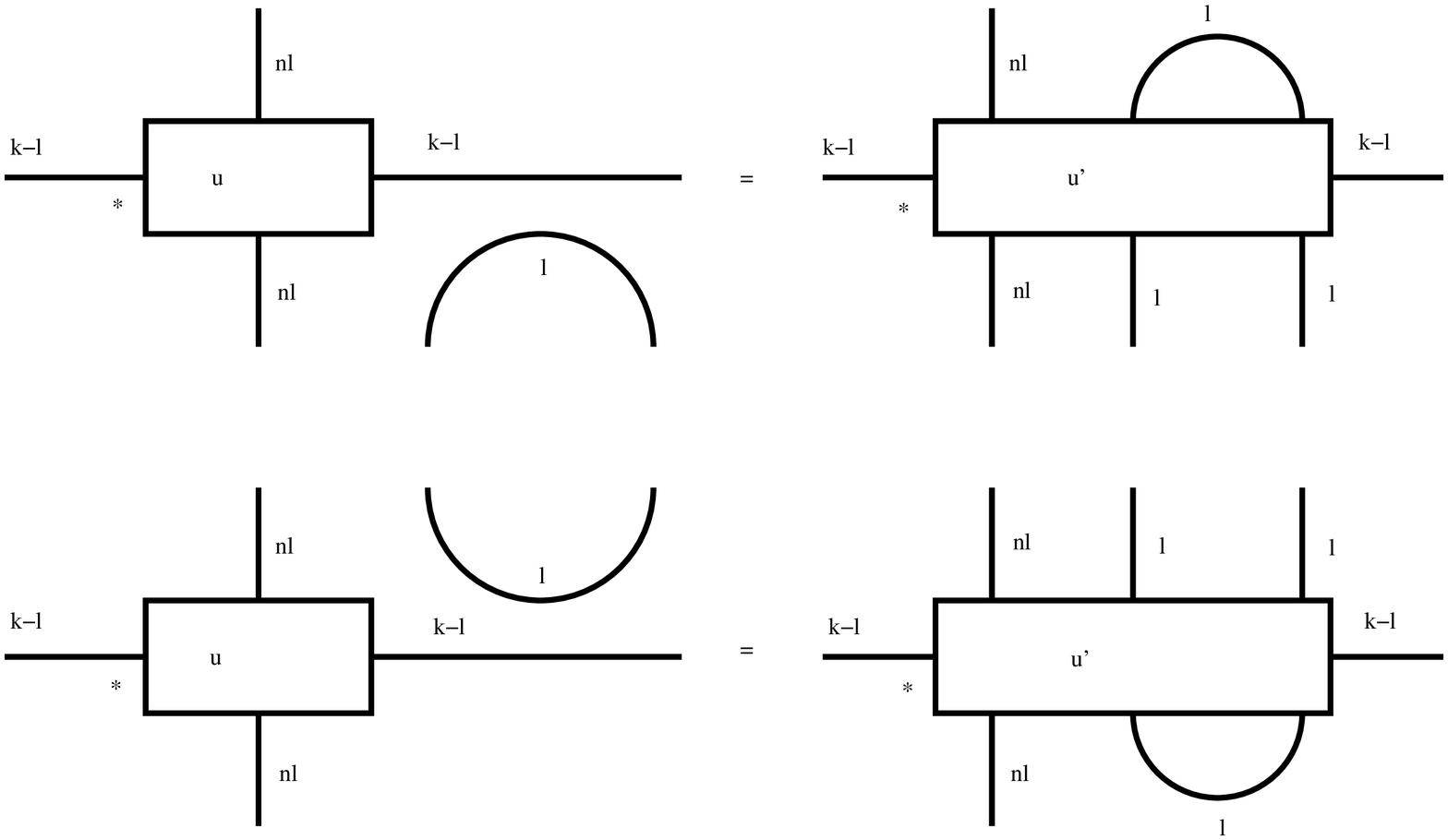}
\caption{Consequence of $\{0,\ell\}$-biunitarity relations}
\label{fig:morerelations}
\end{figure}
Now, these relations, in turn,  imply the equations in Figure \ref{fig:exprelation2}.
In this figure, the first and the third equalities follow from Figure \ref{fig:morerelations}
while the second equality is a consequence of the proof of closure under $I$.

\begin{figure}[h]
\centering
\psfrag{nl}{\tiny $n\ell$}
\psfrag{*}{\tiny $*$}
\psfrag{x}{\small $x$}
\psfrag{y}{\small $y$}
\psfrag{l}{\tiny $\ell$}
\psfrag{u}{\small $u_{(n,\eta)}$}
\psfrag{u'}{\small $u_{(n+2,\eta)}$}
\psfrag{k-l}{\tiny $k-\ell$}
\psfrag{=}{\small $=$}
\includegraphics[height=6.5cm]{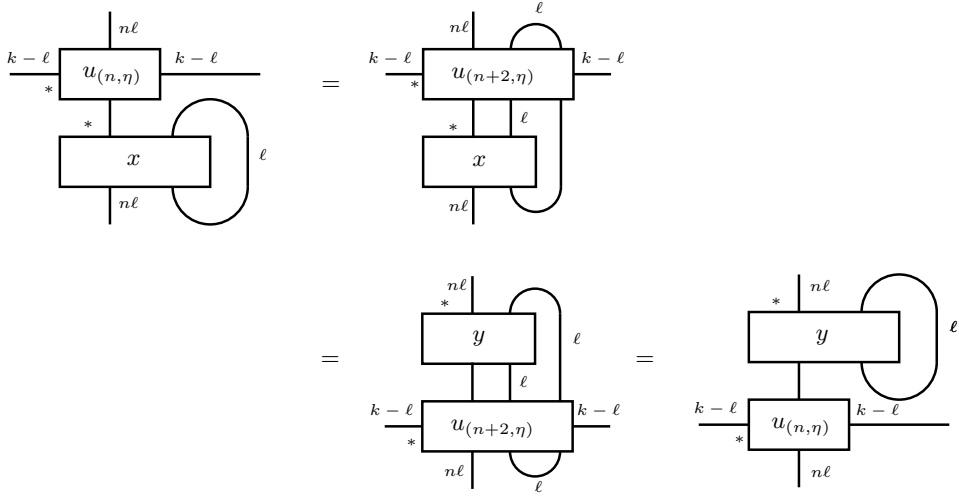}
\caption{Proof of closure under $E$}
\label{fig:exprelation2}
\end{figure}

Closure under $R = R^{(n,-\eta)}_{(n,\eta)}$: We need to see that if $x \in Q_{(n,\eta)}$, then $Z_R^{(^{(\ell,\epsilon)}P)}(x)  = Z^P_{R^{(\ell,\epsilon)}}(x) \in Q_{(n,-\eta)}$.
We illustrate how this is done when $(n,\eta) = (3,+)$. It should be clear that the proof of the general case is similar. Begin by observing that since $x \in Q_{(3,+)} \subseteq P_{(3\ell,\epsilon)}$, it satisfies the double circle relation of Figure \ref{fig:3dcx}.

\begin{figure}[h]
\centering
\psfrag{nl}{\tiny $n\ell$}
\psfrag{*}{\tiny $*$}
\psfrag{delta}{$\delta^{-2(k-\ell)}$}
\psfrag{u}{\small $u$}
\psfrag{u^*}{\small $u^*$}
\psfrag{l}{\tiny $\ell$}
\psfrag{k-l}{\tiny $k-\ell$}
\psfrag{x}{\small $x$}
\psfrag{=}{\small $=$}
\includegraphics[height=5.2cm]{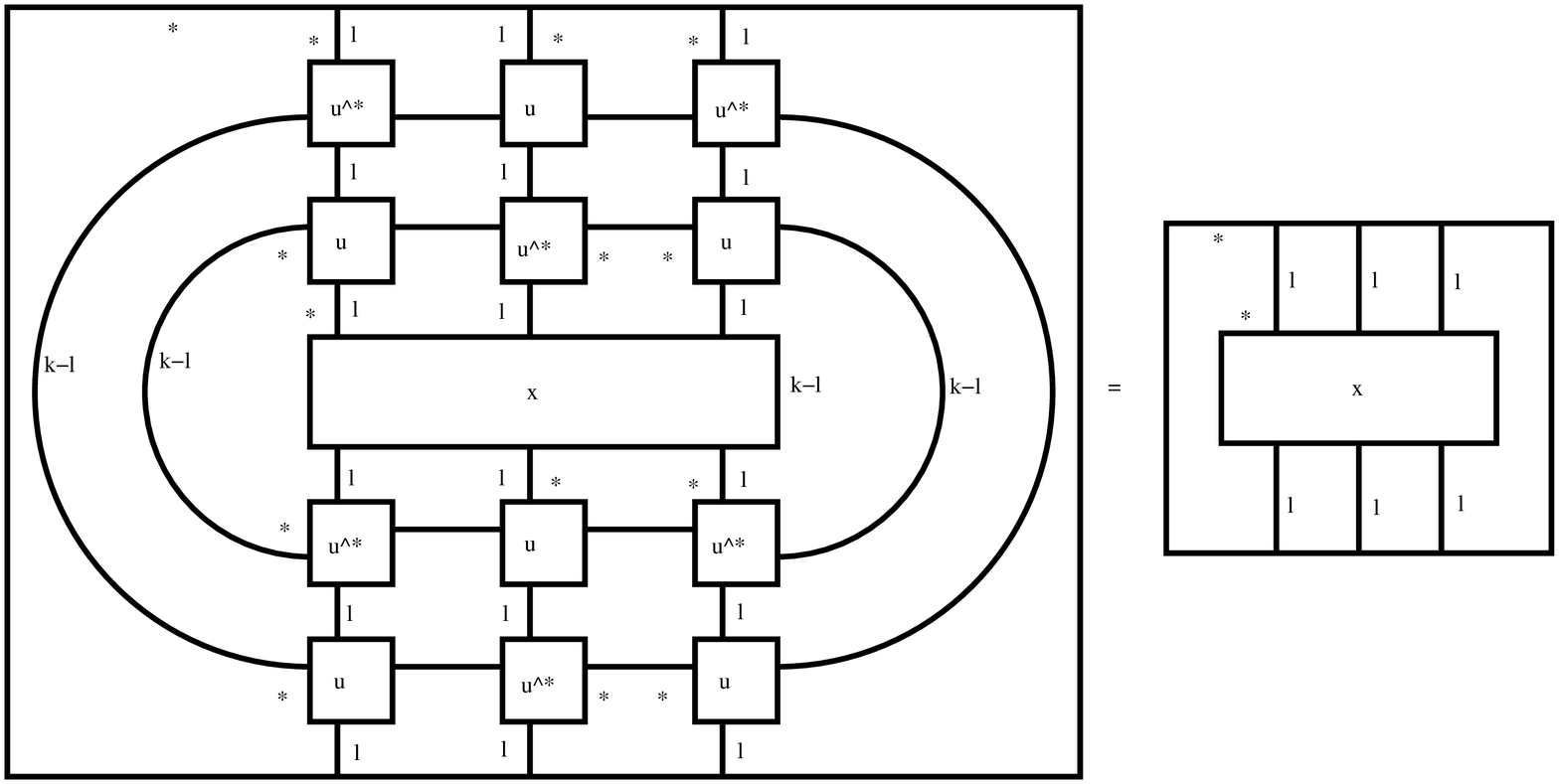}
\caption{Double circle relation for $x \in Q_{(3,+)}$}
\label{fig:3dcx}
\end{figure}

Now, moving the external $*$
$\ell$ steps counterclockwise and redrawing yields the equation in Figure \ref{fig:3dcy}.

\begin{figure}[h]
\centering
\psfrag{nl}{\tiny $n\ell$}
\psfrag{*}{\tiny $*$}
\psfrag{delta}{$\delta^{-2(k-\ell)}$}
\psfrag{u}{\small $u$}
\psfrag{u^*}{\small $u^*$}
\psfrag{l}{\tiny $\ell$}
\psfrag{k-l}{\tiny $k-\ell$}
\psfrag{x}{\small $x$}
\psfrag{=}{\small $=$}
\includegraphics[height=5.2cm]{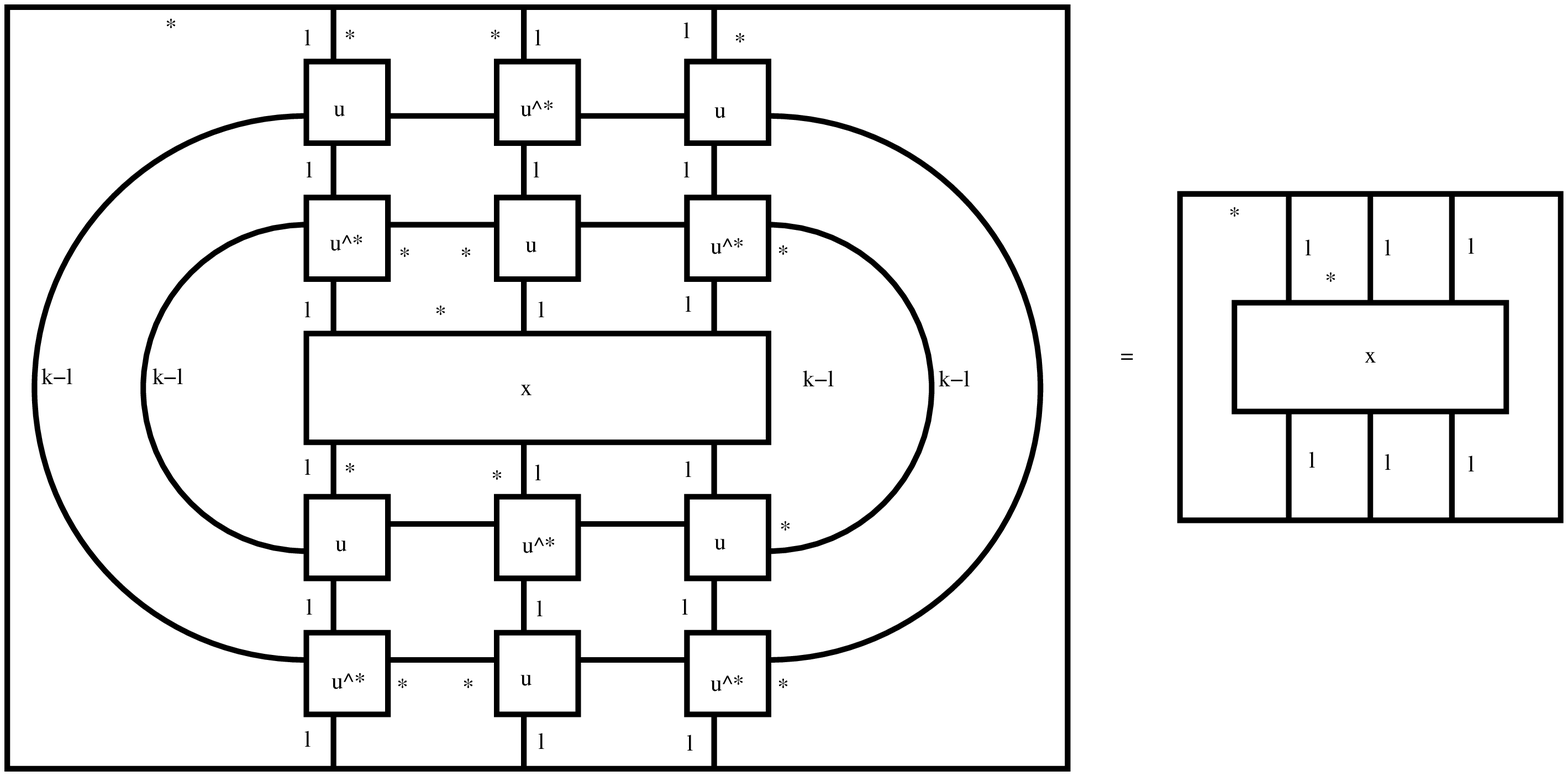}
\caption{Rotated double circle relation for $x \in Q_{(3,+)}$}
\label{fig:3dcy}
\end{figure}

A little thought now shows that this is precisely the double circle relation for
$Z^P_{R^{(\ell,\epsilon)}}(x) \in P_{(3\ell,\epsilon(-)^\ell)}$, establishing  that $Z^P_{R^{(\ell,\epsilon)}}(x)$ indeed belongs to $Q_{(3,-)}$ as desired.
\end{proof}

Next we will consider conditions under which $Q$ is a subfactor planar algebra.

\begin{proposition}\label{qsubf}
Let $P$ be the spin planar algebra on $n$ generators and $Q$ be the planar subalgebra of $^{(\ell,\epsilon)}P$ corresponding to a $\{0,\ell\}$-biunitary element $u \in P_{(k,\epsilon)}$. Then, $Q$ is a subfactor planar algebra with modulus $(\sqrt{n})^\ell$.
\end{proposition}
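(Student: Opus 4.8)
The plan is to check that $Q$, which by Theorem \ref{deftheorem} is already a $C^*$-planar subalgebra of $^{(\ell,\epsilon)}P$, has the two further properties that promote a $C^*$-planar algebra to a subfactor planar algebra: that it is spherical with modulus $(\sqrt n)^\ell$, and that $\dim Q_{(0,+)} = \dim Q_{(0,-)} = 1$. Sphericality and the modulus are inherited, first from $P$ to its cabling $^{(\ell,\epsilon)}P$ and then from $^{(\ell,\epsilon)}P$ to the planar subalgebra $Q$. By construction of the $(\ell,\epsilon)$-cable, a single closed loop of $^{(\ell,\epsilon)}P$ is a nest of $\ell$ concentric closed loops of $P$, each of value $\sqrt n$ (Theorem \ref{spinthm}), so the modulus of $^{(\ell,\epsilon)}P$ is $(\sqrt n)^\ell$; and the spherical identity for $^{(\ell,\epsilon)}P$ --- equality of the two ways of capping an element of $^{(\ell,\epsilon)}P_{(1,\pm)} = P_{(\ell,\epsilon(\pm)^\ell)}$ around the sphere --- cables to the spherical identity for $P$, which holds. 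Since both the value of a closed loop and the spherical identity are statements about closed diagrams, they descend to the planar subalgebra $Q$. I would dispatch these routine points first.

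The real content is the computation of $Q_{(0,\pm)}$. Recall that $Q_{(0,\eta)}$ is the space of $x \in {}^{(\ell,\epsilon)}P_{(0,\eta)} = P_{(0,\epsilon\eta^\ell)}$ satisfying the double circle relation of Figure \ref{fig:dcircle}, and that it contains the nonzero element $1_{(0,\epsilon\eta^\ell)}$ by the `closure under $1^{(0,\pm)}$' step in the proof of Theorem \ref{deftheorem}; hence $\dim Q_{(0,\eta)} \geq 1$ for both $\eta$. When $\epsilon\eta^\ell = +$ there is nothing further to do, since $\dim P_{(0,+)} = 1$ by Theorem \ref{spinthm} --- in particular, when $\epsilon = +$ and $\ell$ is even (the situation of part (3) of Theorem \ref{spinquantumthm}) both $Q_{(0,+)}$ and $Q_{(0,-)}$ are automatically one-dimensional. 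The \emph{genuine} task is to show that whenever $\epsilon\eta^\ell = -$ the double circle relation cuts the $n$-dimensional space $P_{(0,-)}$ --- with basis the spin elements $S(1),\ldots,S(n)$ of Theorem \ref{spinthm} --- down to the line through $1_{(0,-)}$.

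To do this I would write $x = \sum_i c_i\, S(i)$ and unwind the double circle relation of Figure \ref{fig:dcircle} for this $x$, using the biunitarity relations of Figure \ref{fig:unitary}, the defining and multiplication relations of the spin planar algebra for the $S(i)$, and, where convenient, the isometry $\sigma$ and the projections of Lemma \ref{lemma:sigmadef}. The outcome should be a fixed-point equation $\Phi(x) = x$, where $\Phi$ is the linear map on $P_{(0,-)} \cong \mathbb{C}^n$ obtained by encircling $x$ once by a $u$-ring and once by a $u^*$-ring and renormalising, and where one checks that $\Phi$ is unital and $\tau_-$-preserving. Since $P_{(0,-)}$ is abelian, $\Phi$ is automatically a unital trace-preserving Schwarz map, so its fixed-point set is a $C^*$-subalgebra of $P_{(0,-)}$; the remaining point is to show, using that both $u$ and its rotate $Z_{R(k,\epsilon,\ell)}(u)$ are unitary, that this subalgebra is $\mathbb{C}\cdot 1_{(0,-)}$, whence $\dim Q_{(0,\eta)} = 1$. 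I expect this last step --- extracting from the biunitarity of $u$ the collapse of the fixed-point algebra of $\Phi$ to the scalars --- to be the main obstacle; everything else is bookkeeping about cabling together with the results already established for $P$ and for the construction of $Q$.
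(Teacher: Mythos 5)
Your overall architecture matches the paper's: invoke Theorem \ref{deftheorem}, inherit the modulus $(\sqrt n)^{\ell}$ and (a suitable version of) sphericality from $P$ through the cabling, note positivity of the picture trace via $\tau_{\pm}$, and reduce everything to showing $\dim Q_{(0,\eta)}=1$, which is automatic when $\epsilon\eta^{\ell}=+$ and is the real issue when $\epsilon\eta^{\ell}=-$. Up to that point your bookkeeping is correct (one caveat: the spin planar algebra is not spherical in the naive sense, since $\dim P_{(0,-)}=n$; the left and right picture traces only agree after composing with $\tau_{\pm}$, which is the form of sphericality the paper actually uses and which then descends to $Q$ where both $(0,\pm)$-spaces are one-dimensional).

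The genuine gap is exactly the step you flag as ``the main obstacle'' and then leave unproved: showing that the double circle relation cuts $P_{(0,-)}$ down to $\mathbb{C}\cdot 1_{(0,-)}$. Your proposed route --- realize the double circle as a unital, trace-preserving Schwarz map $\Phi$ on the abelian algebra $P_{(0,-)}$, so that its fixed points form a $C^*$-subalgebra, and then try to ``extract from the biunitarity of $u$'' that this subalgebra is the scalars --- does not close on its own, and is aimed in a direction where biunitarity alone cannot save you: for a general $C^*$-planar algebra with $\dim P_{(0,-)}>1$ the fixed-point algebra of such a $\Phi$ need not be trivial, so some input specific to the spin planar algebra is unavoidable. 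The paper's resolution is much more direct and bypasses the fixed-point-algebra machinery entirely: compute $\Phi$ on the basis $S(1),\dots,S(n)$ of $P_{(0,-)}$. In the double circle picture around $S(i)$, the innermost circle encloses the $(0,-)$-box labelled $s_i$, and the black modulus relation of Figure \ref{fig:spinplanar1} evaluates that circle to the scalar $\tfrac{1}{\sqrt n}$; what remains is a closed diagram in $u$ and $u^{*}$, i.e.\ a scalar multiple of $1_{(0,-)}$. Hence the \emph{range} of $\Phi$ is already contained in $\mathbb{C}\cdot 1_{(0,-)}$, so any fixed point $x=\Phi(x)$ is a multiple of $1_{(0,-)}$ and $\dim Q_{(0,\eta)}=1$. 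You should replace the Schwarz-map argument with this range computation; without it the proof is incomplete.
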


\begin{proof} Given Theorem \ref{deftheorem}, what remains to be seen is that
$Q$ is connected, has modulus $(\sqrt{n})^\ell$ and is spherical with positive definite picture trace. Since $P$ has modulus $\sqrt{n}$, the cabling $^{(\ell,\epsilon)}P$ has modulus
$(\sqrt{n})^\ell$ and so does $Q$. The other assertions need a little work.

Note that $Q_{(0,\eta)}$ is a subspace of $(^{(\ell,\epsilon)}P)_{(0,\eta)} = P_{(0,\epsilon\eta^\ell)}$ and so if $\epsilon\eta^\ell = +$, then $Q_{(0,\eta)}$ is 1-dimensional since $P_{(0,+)}$ is so. If $\epsilon\eta^\ell = -$, then, $Q_{(0,\eta)}$ is the
subspace of all $x \in P_{(0,-)}$ such that the double circle relation of Figure \ref{fig:dcircle} holds for $x$. From Theorem \ref{spinthm}, a basis of $P_{(0,-)}$ is given by all $S(i)$ for $i = 1,\cdots,n$ and by the black and white modulus relations, a double circle around these gives a scalar multiple of $1^{(0,-)}$. It follows that $x$ is
necessarily a scalar multiple of $1^{(0,-)}$ so that $Q_{(0,\eta)}$ is 1-dimensional, in this case as well. Hence $Q$ is connected.

To see that $Q$ is spherical, observe first that on any $P_{(n,\eta)}$ the composites of the left and right picture traces with the traces $\tau_\pm$ on
$P_{(0,\pm)}$ (which specify its $C^*$-planar algebra structure - see Definition 12 of \cite{KdyMrlSniSnd2019}) are equal. This is seen by explicit computation with the bases of $P_{(n,\eta)}$ and can be regarded as a version of sphericality for $P$. It is clear that this property descends to $Q$.

Finally observe that the picture trace on $Q_{(n,\eta)}$ is exactly the composite of
$\tau_\pm$ with the picture trace on $P_{(n\ell,\epsilon\eta^\ell)}$ and is consequently positive definite.\end{proof}

\begin{remark} In particular, Hadamard matrices, quantum Latin squares and biunitary matrices all yield subfactor planar algebras via this construction. It is not clear, however, how to get a subfactor planar algebra from a unitary error basis.
\end{remark}

\begin{remark} An even easier proof than that of Proposition \ref{qsubf} shows that if $P$ is a subfactor planar algebra and $Q$ is the planar subalgebra of $^{(\ell,\epsilon)}P$ corresponding to a $\{0,\ell\}$-biunitary element $u \in P_{(k,\epsilon)}$, then, $Q$ is a subfactor planar algebra with modulus $(\sqrt{n})^\ell$.

\end{remark}

In case $\ell =1$, the planar algebra $Q$ is even irreducible.

\begin{proposition}
Let $P$ be the spin planar algebra on $n$ generators and $Q$ be the planar subalgebra of $^{(1,\epsilon)}P$ corresponding to a $\{0,1\}$-biunitary element $u \in P_{(k,\epsilon)}$. Then, $Q$ is an irreducible subfactor planar algebra.
\end{proposition}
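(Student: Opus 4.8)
The plan is to show that the irreducibility condition $\dim Q_{(1,+)} = 1$ holds, since we already know from Proposition~\ref{qsubf} that $Q$ is a subfactor planar algebra, and irreducibility for a subfactor planar algebra is exactly the statement that the one-box space is $1$-dimensional. (One must check both $Q_{(1,+)}$ and $Q_{(1,-)}$, but the two arguments are parallel, so I would present one in detail.) Recall that $Q_{(1,\eta)}$ is a subspace of $^{(1,\epsilon)}P_{(1,\eta)} = P_{(1,\epsilon\eta)}$, which by Theorem~\ref{spinthm} is $n$-dimensional with basis the tangles $S(i)$ when $\epsilon\eta = -$, and is $1$-dimensional when $\epsilon\eta = +$. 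So in the case $\epsilon\eta = +$ there is nothing to prove, and the whole content is the case $\epsilon\eta=-$: one must show that among the $x \in P_{(0,-)}$ (here $n\ell = n\cdot 1 = 1$, wait --- more precisely $x \in P_{(1,\epsilon\eta)}$ with $1\cdot\ell = 1$ strand) satisfying the double circle relation of Figure~\ref{fig:dcircle}, only the scalar multiples of the identity survive.

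The key steps, in order, would be: first, specialize the double circle relation of Figure~\ref{fig:dcircle} to the case $n=1$, $\ell = 1$, so that $x$ lives on a single through-strand of colour $(1,\epsilon\eta)$ and $u_{(1,\eta)}$ involves a single copy of $u$ (or $u^*$) capping off that strand together with the $k-1$ extra strands coming from the biunitary box. Second, expand $x$ in the basis $\{S(i)\}$ (in the shaded-point case) and push the double circle computation through using the black and white modulus relations of Figure~\ref{fig:spinplanar1}: a double circle of $u_{(1,\eta)}, u_{(1,\eta)}^*$ around $S(i)$ should, after using the biunitarity relations of Figure~\ref{fig:unitary} to collapse the $u$-$u^*$ pair, together with the modulus relations, return a scalar multiple of $1^{(0,-)}$ independent of $i$, exactly as in the analogous argument already given in the proof of Proposition~\ref{qsubf} for $Q_{(0,\eta)}$. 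Third, conclude that any $x$ satisfying the relation is forced to equal its own image under a rank-one projection onto $\mathbb{C}\cdot 1$, hence is a scalar; this gives $\dim Q_{(1,\eta)} \le 1$, and since $1 \in Q_{(1,\eta)}$ always, equality holds. Finally, invoke the standard fact that a subfactor planar algebra with $\dim(\text{one-box space}) = 1$ is irreducible.

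The main obstacle I expect is bookkeeping with the shading and the colour $(1,\epsilon\eta)$: the element $x$ sits not literally in $P_{(0,-)}$ but in $P_{(1,\epsilon\eta)}$, and the reduction of the $\ell=1$ double circle relation to a statement about $P_{(0,-)}$ (or directly about $P_{(1,-)}$, whose basis one reads off from Figure~\ref{fig:spinplanar6}) needs care --- one must correctly track which of the $k-1$ auxiliary strands of $u_{(n,\eta)}$ get capped and which become through-strands, and verify that after the $u u^*$ cancellation only a single closed loop (contributing a modulus factor) and the original through-strand remain. In fact the cleanest route may be to observe that $Q_{(1,\eta)}$ is the fixed-point space of the $*$-equivariant projection $E = \sigma\sigma^*$ composed appropriately, and then to note that this is precisely the situation already handled verbatim in the proof of Proposition~\ref{qsubf} with $n$ there replaced by $1$; so the proof can be phrased as ``the argument of Proposition~\ref{qsubf} showing $\dim Q_{(0,\eta)} = 1$ applies mutatis mutandis to show $\dim Q_{(1,\eta)} = 1$, which is the definition of irreducibility.'' Everything else is routine.
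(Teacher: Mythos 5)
Your overall strategy --- expand $x$ in a basis of the one-box space, impose the double circle relation of Figure~\ref{fig:dcircle}, and use the biunitarity relations of Figure~\ref{fig:unitary} together with the modulus relations to force $x$ to be a scalar multiple of the identity --- is exactly the paper's, which writes $x=\sum_i\lambda_i e(i]\in Q_{(1,+)}\subseteq P_{(1,+)}$ and shows the double-circled picture collapses to $\frac{1}{n}\sum_i\lambda_i 1_{(1,+)}$. But there is a concrete error in your dimension bookkeeping that would derail the write-up. You assert that $P_{(1,\epsilon\eta)}$ is ``$n$-dimensional with basis the tangles $S(i)$ when $\epsilon\eta=-$, and is $1$-dimensional when $\epsilon\eta=+$,'' and conclude that in the case $\epsilon\eta=+$ there is nothing to prove. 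This conflates the one-box spaces $P_{(1,\pm)}$ with the zero-box spaces $P_{(0,\pm)}$: Theorem~\ref{spinthm} gives $\dim P_{(k,\pm)}=n^k$ for all $k>0$, so \emph{both} $P_{(1,+)}$ and $P_{(1,-)}$ are $n$-dimensional, with bases $\{e(q]\}$ and $\{e[p)\}$ read off from Figure~\ref{fig:spinplanar6} (the case $m=0$); the elements $S(i)$ form a basis of $P_{(0,-)}$, not of $P_{(1,-)}$. The case you dismiss as vacuous, $\epsilon\eta=+$, is precisely the one the paper computes nontrivially (it takes $\epsilon=+$ and works in $Q_{(1,+)}\subseteq P_{(1,+)}$), while the case you propose to compute is set up in the wrong space with the wrong basis.

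The same confusion undercuts your suggested shortcut of quoting the $\dim Q_{(0,\eta)}=1$ argument from Proposition~\ref{qsubf} ``mutatis mutandis.'' That argument works because a double circle around $S(i)\in P_{(0,-)}$ is a closed diagram evaluated by the black and white modulus relations alone; for $x$ in a one-box space there is a genuine through-strand, and one must actually invoke the biunitarity relations to cancel the $u$/$u^*$ pair before the modulus relations finish the evaluation --- this is the content of simplifying the left-hand side of Figure~\ref{fig:irredproof} to $\frac{1}{n}\sum_i\lambda_i 1_{(1,+)}$, from which $x$ is forced to be a multiple of $1_{(1,+)}$. Once you correct the identification of the spaces and carry out that one computation, the rest of your outline (connectedness and positivity coming from Proposition~\ref{qsubf}, and irreducibility being the statement $\dim Q_{(1,+)}=1$) matches the paper.
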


\begin{proof} Only the irreducibility of $Q$ needs to be seen and we will show using explicit bases computations that $dim(Q_{(1,+)}) = 1$. 
We only consider the $\epsilon = +$ case, the proof in the other case being similar. Thus $Q_{(1,+)} \subseteq P_{(1,+)}$.
Begin with $x = \sum_i \lambda_i e(i] \in Q_{(1,+)}$. The double circle relation for $x$
implies that the equation of Figure \ref{fig:irredproof} holds.
\begin{figure}[h]
\psfrag{une}{\small $u$}
\psfrag{une*}{\small $u^*$}
\psfrag{d}{\small $\sum_i \lambda_i (\sqrt{n})^{-2(k-1)}$}
\psfrag{nl}{\tiny $$}
\psfrag{kml}{\tiny $k-1$}
\psfrag{x}{$x$}
\psfrag{s1}{\tiny $s_i$}
\includegraphics[height=5cm]{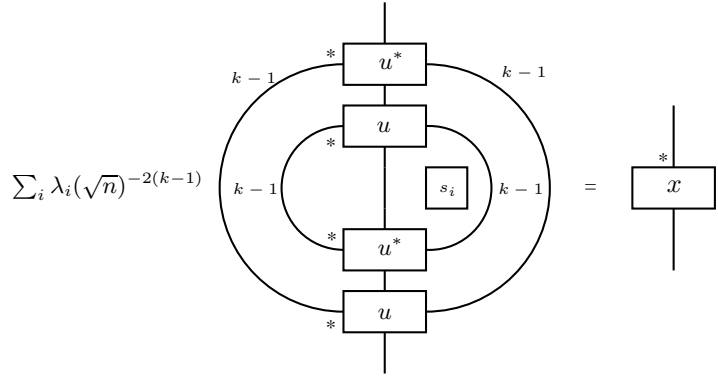}
\caption{Equation satisfied by $x$}
\label{fig:irredproof}
\end{figure}

Now, using the biunitarity relations of Figure \ref{fig:unitary} together with the black and white modulus relations, the left hand side of 
Figure \ref{fig:irredproof} simplifies to $\frac{1}{n}\sum_i \lambda_i 1_{(1,+)}$, finishing the proof.\end{proof}

\begin{remark}\label{exception} The proof of Theorem \ref{deftheorem} relies heavily on 
Proposition \ref{xycircles}, and in particular, the double circle relation,  which uses the assumption that $P$ is a spherical $C^*$-planar algebra. However, even if $P$ is just a $*$-planar algebra, without the positivity conditions or sphericality holding, $Q$ can still be shown to be a $*$-planar subalgebra of $P$.
The proof is a little longer using a different larger set of generating tangles.
\end{remark}

\section{The planar algebra associated to a finite group Latin square}

Throughout this section, $G$ will be a finite group of order $n$. Associated to $G$
is its multiplication table which is an $n \times n$ Latin square and consequently yields - see Example \ref{lstoqls} - a quantum Latin square of size $n$. If $P$ is the
spin planar algebra on $n$ generators, this quantum Latin square gives, by Theorem \ref{spinquantumthm}(2), a $\{0,1\}$-biunitary element in $P_{(3,+)}$. Applying the construction of Theorem \ref{deftheorem} to this biunitary element, we get an irreducible subfactor planar algebra. The main result of this section identifies this
planar algebra with the well known group planar algebra $P(G)$ - see \cite{Lnd2002}.

We begin with a notational convention. The generating set for the spin planar algebra is the underlying set of the group $G$ and we will use notation such as
$e[g)^{h_1 \cdots h_{m}}_{k_1 \cdots k_m}(\ell]$ for $g,h_i,k_j,\ell \in G$ to denote basis elements of $P$. With this notation, the $\{0,1\}$-biunitary element $u \in P_{(3,+)}$ corresponding to the multiplication table of the group $G$ is seen to be given by
$$
u = \sum_{h,k} e^{kh}_k(h] \in P_{(3,+)},
$$
according to Theorem \ref{spinquantumthm}(2).

Let $Q$ be the planar subalgebra of $P$ corresponding to the biunitary element $u$
as in Theorem \ref{deftheorem}. The next proposition identifies $Q$ and we sketch a proof leaving out most of the computational details.

\begin{proposition}
The planar algebra $Q$ is isomorphic to $P(G)$.
\end{proposition}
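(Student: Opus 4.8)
The plan is to construct an explicit planar algebra isomorphism $\Phi: P(G) \to Q$ and verify it respects the tangle action. Recall that $P(G)_{(n,\eta)}$ has a standard basis indexed by tuples of group elements (the ``loop'' or ``path'' model), with dimension $n^{n-1}$ or similar depending on $\eta$, and that its structure constants encode group multiplication. On the other side, $Q_{(n,\eta)} \subseteq {}^{(1,\epsilon)}P_{(n,\eta)} = P_{(n,\epsilon\eta)}$ is cut out by the double circle relation of Figure \ref{fig:dcircle} relative to the biunitary $u = \sum_{h,k} e^{kh}_k(h]$. First I would compute, using the explicit form of $u$ and the multiplication relations among the $e$-basis elements from Lemma 10 of \cite{KdyMrlSniSnd2019}, exactly which linear combinations $x = \sum \lambda_{\cdots} e[\cdots)^{\cdots}_{\cdots}(\cdots]$ in $P_{(n,\epsilon\eta)}$ satisfy the double circle relation; the claim will be that this subspace has a basis naturally indexed by the same data as $P(G)_{(n,\eta)}$, with $u$'s appearances forcing ``group-like'' constraints (the subscripts/superscripts of a surviving basis element must be related by left or right translation by a common group element).

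The key steps, in order: (1) fix dimensions — show $\dim Q_{(n,\eta)} = \dim P(G)_{(n,\eta)}$ by identifying the solution space of the double circle relation; since we already know from the preceding proposition that $Q_{(1,+)}$ is $1$-dimensional and $Q$ is an irreducible subfactor planar algebra of modulus $\sqrt n$, the principal graph considerations (or a direct count) pin these dimensions down. (2) Write down the candidate isomorphism $\Phi$ on basis elements: send a basis vector of $P(G)_{(n,\eta)}$ labelled by group elements $(g_1,\dots,g_{n-1})$ to the corresponding surviving $e$-basis combination in $Q_{(n,\eta)}$, appropriately normalised by powers of $\sqrt n$ to match moduli. (3) Check $\Phi$ intertwines the action of the generating tangles $\{1^{(0,\pm)}, M, I, E, R\}$ from the proof of Theorem \ref{deftheorem}: multiplication tangles correspond to group multiplication of labels (this is where the Latin-square/group structure is essential), inclusion and conditional expectation tangles correspond to the standard inclusion/trace maps on $P(G)$, and the rotation tangle corresponds to the cyclic rotation of loops in $P(G)$ together with an inversion $g \mapsto g^{-1}$ coming from the $*$-structure on $u$. (4) Verify $\Phi$ is a $*$-map and an isometry for the trace inner products, which follows once the tangle action is matched since both are $C^*$-planar algebras with the same modulus.

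The main obstacle will be step (3) for the multiplication and rotation tangles: one must unwind the cabling operation $M \mapsto M^{(1,\epsilon)}$, insert the defining element $u_{(n,\eta)}$ (Figure \ref{fig:definingtangle}) with its alternating $u/u^*$ labels, and simplify the resulting closed diagram using the biunitarity relations of Figure \ref{fig:unitary} and the spin planar algebra relations — the bookkeeping of which group element ``flows'' along which cabled strand is delicate, and the parity-dependent shading conventions make the rotation case particularly fiddly. The reason to expect this to work cleanly, though, is structural: $u$ literally \emph{is} the group multiplication table, so contracting $u$ against $u^*$ in the double circle produces exactly a Kronecker delta enforcing $g \cdot h = (g\cdot h)$, i.e. the associativity/cancellation identities of $G$ — precisely the relations defining $P(G)$. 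A cleaner alternative, which I would mention as a remark, is to invoke the uniqueness of the subfactor planar algebra with a given principal graph: since $Q$ is irreducible with modulus $\sqrt n$ and its principal graph can be read off as that of the group-subgroup subfactor $R^G \subseteq R$ (equivalently $R \rtimes G$), it must coincide with $P(G)$; but the paper's stated intent is a direct pictorial identification, so the explicit $\Phi$ is the route to take, with computational details suppressed as announced.
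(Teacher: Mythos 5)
Your proposal is correct in substance and its computational core coincides with the paper's: both identify $Q_{(n,\eta)}$ by solving the double circle relation of Figure \ref{fig:dcircle} (equivalently the relation of Figure \ref{fig:xyeqn}) against the explicit $u_{(n,\eta)}$'s, finding that a coefficient vector survives exactly when it is invariant under the diagonal left-translation action of $G$ on the indices, so that $Q_{(2m,+)}$ has basis the orbit sums $\sum_{g}e^{gk_1,\cdots,gk_m}_{g\ell_1,\cdots,g\ell_m}$ and dimension $n^{2m-1}$. Where you diverge is in how the map is produced and verified. The paper exploits the presentation of $P(G)$ by generators and relations \cite{Lnd2002}: it sends the single generator $g\in L_{(2,+)}$ to $X_g=\sum_q e^q_{qg}$, checks that the finitely many defining relations of $P(G)$ hold for the $X_g$ (this automatically yields a planar algebra morphism $P(G)\to Q$, with no tangle-equivariance left to verify), and then concludes by surjectivity onto the orbit-sum basis plus the dimension count. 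Your route instead defines $\Phi$ on the full loop basis of every $P(G)_{(n,\eta)}$ and checks equivariance for each generating tangle $\{1^{(0,\pm)},M,I,E,R\}$ in every colour; this is logically sound but trades the paper's finite relation-check for an infinite (if uniform in $m$) family of diagrammatic verifications, including the parity-sensitive rotation case you rightly flag as delicate. One small correction to your step (1): irreducibility and modulus $\sqrt{n}$ alone do not ``pin down'' the higher dimensions via principal graph considerations--you need the direct count you describe, which is what the paper does.

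Your proposed ``cleaner alternative'' via uniqueness of the subfactor planar algebra with a given principal graph should be dropped. An irreducible depth-two subfactor of index $n$ with star-shaped principal graph corresponds to an $n$-dimensional Kac algebra, not necessarily to the group $G$ (nor even to a group at all), so the principal graph does not single out $P(G)$; identifying the Kac algebra as ${\mathbb C}[G]$ would require essentially the same explicit computation you are trying to avoid. Since you present this only as a remark and take the explicit $\Phi$ as the actual argument, it does not damage the proof, but as stated the shortcut is not valid.
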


\begin{proof}
The planar algebra  $P(G)$ has a presentation by generators and relations.
We show that it is isomorphic to $Q$ in a series of steps - (1) computing the dimensions of the spaces of $Q$ and observing that these are equal to those of the spaces of $P(G)$, (2) by specifying a map from the universal planar algebra on $L = L_{(2,+)} = G$ to $Q$ and checking that the relations hold in $Q$, thereby yielding a planar algebra map
from $P(G)$ to $Q$ and (3) verifying that this map is surjective.

Step 1: We first observe that for $ m \geq 1$, the elements $u_{(2m,+)}$ and $u_{(2m+1,+)}$ are given explicitly by:
\begin{eqnarray*}
u_{(2m,+)} &=& \sum_{g,h_1,\cdots,h_m} e^{g,h_1^{-1},\cdots,h_m^{-1}}_{gh_1,gh_2,\cdots,gh_m,g}\\
u_{(2m+1,+)} &=& \sum_{g,h_1,\cdots,h_{m+1}} e^{g,h_1^{-1},\cdots,h_m^{-1}}_{gh_1,gh_2,\cdots,gh_{m+1}}(h_{m+1}^{-1}]
\end{eqnarray*}
Now consider elements $x,y \in P_{(2m,+)}$ given by:
\begin{eqnarray*}
x &=& \sum_{k_1,\cdots,k_m,\ell_1,\cdots,\ell_m} \alpha_{(k_1,\cdots,k_m,\ell_1,\cdots,\ell_m)}e^{k_1,\cdots,k_m}_{\ell_1,\cdots,\ell_m}\\
y &=& \sum_{k_1,\cdots,k_m,\ell_1,\cdots,\ell_m} \beta_{(k_1,\cdots,k_m,\ell_1,\cdots,\ell_m)}e^{k_1,\cdots,k_m}_{\ell_1,\cdots,\ell_m},
\end{eqnarray*}
for $\alpha_{(k_1,\cdots,k_m,\ell_1,\cdots,\ell_m)}, \beta_{(k_1,\cdots,k_m,\ell_1,\cdots,\ell_m)} \in {\mathbb C}$. The condition that $x,y$ satisfy the condition in Figure \ref{fig:xyeqn} is seen to imply that for all $g \in G$,  $$\alpha_{(k_1,\cdots,k_m,\ell_1,\cdots,\ell_m)} = \alpha_{(gk_1,\cdots,gk_m,g\ell_1,\cdots,g\ell_m)}.$$
Conversely, if this condition holds, setting $\beta_{(k_1,\cdots,k_m,\ell_1,\cdots,\ell_m)} = \alpha_{(k_1^{-1},\cdots,k_m^{-1},\ell_1^{-1},\cdots,\ell_m^{-1})}$, the elements $x$ and $y$ are checked satisfy the condition in Figure \ref{fig:xyeqn}.

Thus, a basis of $Q_{(2m,+)}$ is given by the set of all
$$
\sum_{g \in G} e^{gk_1,\cdots,gk_m}_{g\ell_1,\cdots,g\ell_m}
$$
as $(k_1,\cdots,k_m,\ell_1,\cdots,\ell_m)$ vary over the representatives of the diagonal action of $G$ on $G^{2m}$.
It follows that the dimension of $Q_{(2m,+)}$ is given by $n^{2m-1}$ and a similar proof shows that the dimension of $Q_{(2m+1,+)}$ is given by $n^{2m}$.

Step 2: Next we define a map from the universal planar algebra on the label set $L = L_{(2,+)} = G$ to $Q$ given by sending $g \in G$ to
$$
X_g = \sum_{q \in G} e^q_{qg} \in Q_{(2,+)}.
$$
A long but routine verification - which we omit entirely - establishes that all relations satisfied by the $g \in G$ in the planar algebra $P(G)$ also hold for their images $X_g$ in $Q$ thus giving a planar algebra map from $P(G)$ to $Q$.

\begin{figure}[!h]
\psfrag{k1}{\tiny $k_1$}
\psfrag{k2}{\tiny $k_2$}
\psfrag{k3}{\tiny $k_3$}
\psfrag{kmm1}{\tiny $k_{m-1}$}
\psfrag{km}{\tiny $k_m$}
\psfrag{l1}{\tiny $\ell_1$}
\psfrag{l2}{\tiny $\ell_2$}
\psfrag{l3}{\tiny $\ell_3$}
\psfrag{lmm1}{\tiny $\ell_{m-1}$}
\psfrag{lm}{\tiny $\ell_m$}
\psfrag{cdots}{$\cdots$}
\includegraphics[height=4cm]{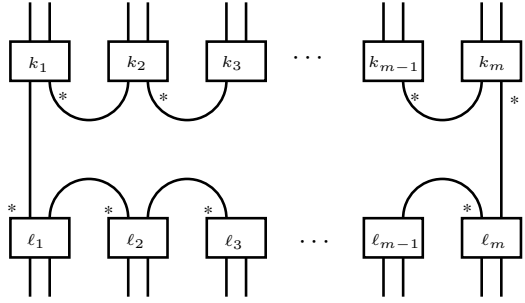}
\caption{An element of $P_{(2m,+)}$}
\label{fig:basiselt}
\end{figure}

Step 3: To verify that $P(G) \rightarrow Q$ is surjective, it suffices to see that 
$P(G)_{(2m,+)} \rightarrow Q)_{(2m,+)}$ is surjective for all $m > 0$. A routine calculation shows that the element in Figure \ref{fig:basiselt} goes to the basis
element $\sum_{g \in G} e^{gk_1,\cdots,gk_m}_{g\ell_1,\cdots,g\ell_m}$ of $Q_{(2m,+)}$, finishing the proof.
\end{proof}

\end{document}